\newtheorem{theorem}{Theorem} 
\newtheorem{proposition}{Proposition}
\newtheorem{lemma}{Lemma}
\newtheorem*{proof-claim}{Proof}
\theoremstyle{definition}
\newtheorem{remark}{Remark}
\newtheorem{definition}{Definition}
\newtheorem{question}{Question}
\newtheorem{conjecture}{Conjecture}
\newtheoremstyle{colon}%
{}
{}
{\itshape}
{}
{\bfseries}
{:}
{ }
{}
\theoremstyle{colon}
\newtheorem*{statement1}{$\mathsf{HCT}_{n}(G)$}
\newtheorem*{statement2}{$\mathsf{HCX}_{n}(G)$}
\DeclareMathOperator{\ind}{ind}
\DeclareMathOperator{\coind}{co-ind}
\DeclareMathOperator{\xind}{x-ind}
\DeclareMathOperator{\sd}{sd}
\def\leq{\leqslant}
\def\geq{\geqslant}
\begin{document} 

\title{A topological version of Hedetniemi's conjecture for equivariant spaces}

\author{Vuong Bui}
\address{Vuong Bui, Institut f\"{u}r Informatik, Freie Universit\"{a}t Berlin, Takustrasse 9, 14195 Berlin, Germany}
\email{bui.vuong@yandex.ru}

\author{Hamid Reza Daneshpajouh}
\address{Hamid Reza Daneshpajouh,
School of Mathematical Sciences, University of
Nottingham Ningbo China, 199 Taikang East Road, Ningbo, 315100, China}
\email{Hamid-Reza.Daneshpajouh@nottingham.edu.cn}

\begin{abstract}
A topological version of the famous Hedetniemi conjecture says:
The mapping index of the Cartesian product of two $\mathbb Z/2$-spaces is equal to the minimum of their $\mathbb Z/2$-indexes. The main purpose of this article is to study  the topological version of the Hedetniemi conjecture for $G$-spaces. Indeed, we show that the topological Hedetniemi conjecture cannot be valid for general pairs of $G$-spaces. More precisely, we show that this conjecture can possibly survive if the group $G$ is either a cyclic $p$-group or a generalized quaternion group whose size is a power of 2.
\end{abstract}

\keywords{Cross-index; Hedetniemi's conjecture; Mapping index}

\maketitle

\section{Introduction}\label{sec:intro}

The original motivation of this work comes from a long-standing conjecture of Stephen T. Hedetniemi~\cite{hedetniemi1966} which has been disproved  recently~\cite{shitov2019}. In $1966$, Hedetniemi conjectured that the chromatic number of the categorical product of two graphs is equal to the minimum of their chromatic numbers. This conjecture has attracted a great deal of interest over the past half-century. The conjecture has been shown to hold for some families of graphs~\cite{zhu1998survey, tardif2008hedetniemi, hajiabolhassan2016hedetniemi}, and also the fractional version of this conjecture has been verified~\cite{zhu2011fractional}. Although there were some some positive partial results in this regard, this longstanding conjecture has ended up being false with a counterexample given by Y. Shitov~\cite{shitov2019}. However, there are still some interesting open questions around. Not long before the conjecture got disproved, it had been shown~\cite{wrochna2019,matsushita2017z2} that if the conjecture held, then it would imply a similar equality in the category of equivariant space. To state it precisely, we need to recall the definition of mapping index for $G$-spaces.

Throughout this paper $G$ stands for a non-trivial finite group. For a $G$-space $X$ with a free action of a finite group $G$, the \emph{mapping index} $\ind X$ is the minimal $k$ such that there exists a $G$-equivariant map\footnote{A $G$-equivariant map $f: X\to Y$ is a continuous map that also preserves the $G$-action, i.e., $f(gx)=gf(x)$ for all $g\in G$ and $x\in X$. Moreover, if $X$ and $Y$ are $G$-simplicial complexes and $f$ is also a simplicial map, then it is called a $G$-simplicial map.} $X \to E_kG$, where $E_kG$ is the standard $(k+1)$-fold join $G^{*(k+1)}$. In the case $G=\mathbb Z/2$ the space $E_kG$ is topologically a sphere $S^k$ with the antipodal action of $\mathbb Z/2$, given by $x\mapsto -x$; hence, in this case, the definition is about equivariant maps to spheres. Now, we are in a position to recall the aforementioned topological statement. Indeed, they proved if Hedetniemi's conjecture is true, then the mapping index of the Cartesian product of two $\mathbb Z/2$-spaces (equipped with the diagonal action) is equal to the minimum of their $\mathbb Z/2$-indexes for every pair of finite $\mathbb Z/2$-simplicial complexes. Moreover, M. Wrochna~\cite{wrochna2019} conjectured the correctness of this statement. 
\begin{conjecture}[\cite{wrochna2019}]
\label{conj: product}
For every pair $\mathcal{K},\mathcal{L}$ of finite free $\mathbb Z/2$-simplicial complexes, we have 
\begin{equation} \label{eq:key-equation}
    \ind \mathcal{K}\times \mathcal{L}  = \min\{\ind \mathcal{L}, \ind \mathcal{L}\}.
\end{equation}
\end{conjecture}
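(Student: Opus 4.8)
The plan is to prove the two inequalities separately, the equality \eqref{eq:key-equation} being their conjunction. The bound $\ind(\mathcal{K}\times\mathcal{L})\le\min\{\ind\mathcal{K},\ind\mathcal{L}\}$ is the routine half. Assuming without loss of generality $\ind\mathcal{K}\le\ind\mathcal{L}$ and writing $n=\ind\mathcal{K}$, I would fix a $\mathbb{Z}/2$-map $g\colon\mathcal{K}\to S^n$ and precompose it with the coordinate projection $\pi\colon\mathcal{K}\times\mathcal{L}\to\mathcal{K}$. Since the action on the product is diagonal, $\pi$ is $\mathbb{Z}/2$-equivariant, hence so is $g\circ\pi$, which witnesses $\ind(\mathcal{K}\times\mathcal{L})\le n$.

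All the content lies in the reverse bound $\ind(\mathcal{K}\times\mathcal{L})\ge\min\{\ind\mathcal{K},\ind\mathcal{L}\}$, which I would approach by an adjunction argument, the topological shadow of the exponential-graph method from graph colouring. Put $n=\ind(\mathcal{K}\times\mathcal{L})$, fix a $\mathbb{Z}/2$-map $f\colon\mathcal{K}\times\mathcal{L}\to S^n$, and suppose for contradiction that $\ind\mathcal{K}>n$ and $\ind\mathcal{L}>n$. On the mapping space $M=\operatorname{Map}(\mathcal{L},S^n)$ introduce the involution $(\sigma\phi)(y)=-\phi(\sigma y)$, where $\sigma$ generates $\mathbb{Z}/2$ and $-$ is the antipode of $S^n$; a direct check shows its fixed points are exactly the $\mathbb{Z}/2$-maps $\mathcal{L}\to S^n$, so the hypothesis $\ind\mathcal{L}>n$ makes $M$ a \emph{free} $\mathbb{Z}/2$-space. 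The adjoint $\hat f\colon\mathcal{K}\to M$, $\hat f(x)=f(x,-)$, is $\mathbb{Z}/2$-equivariant (this follows from $f(\sigma x,\sigma y)=-f(x,y)$), and monotonicity of the index under equivariant maps yields $\ind\mathcal{K}\le\ind M$.

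Everything now rests on the key lemma: if the above involution on $M=\operatorname{Map}(\mathcal{L},S^n)$ is free, then $\ind M\le n$. Granting it, $\ind\mathcal{K}\le\ind M\le n$ contradicts $\ind\mathcal{K}>n$ and the proof is complete; so this lemma is the heart of the matter and, I expect, the main obstacle. Note that $M$ is infinite-dimensional, so the elementary estimate \emph{dimension at most $n$ implies index at most $n$} is unavailable, and a purely cohomological attack is insufficient: the Fadell--Husseini $\mathbb{Z}/2$-index, read from $H^*(B\mathbb{Z}/2;\mathbb{Z}/2)=\mathbb{Z}/2[t]$ as the top nonvanishing power of $t$, is monotone and bounded above by $\ind$, but only produces a lower bound $\ind(\mathcal{K}\times\mathcal{L})\ge\min\{d(\mathcal{K}),d(\mathcal{L})\}$ in terms of the cohomological indices $d(\cdot)$, which can be strictly below the mapping indices. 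Bridging this gap is exactly what the lemma must accomplish.

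For the lemma itself the tempting construction---averaging $\phi$ against a $\sigma$-invariant probability measure on $\mathcal{L}$ to obtain a $\mathbb{Z}/2$-map $M\to\mathbb{R}^{n+1}$ and then normalising onto $S^n$---breaks down precisely on the $\mathbb{Z}/2$-invariant locus where the average vanishes, and that locus is itself a free $\mathbb{Z}/2$-space on which one must recurse. Removing this obstruction is where the arithmetic of $G$ has to be used: the decisive features for $G=\mathbb{Z}/2$ are that $E_nG=S^n$ is a genuine sphere and that the index is governed by the single polynomial generator $t$, both expressions of the periodicity of the cohomology of $G$. I would try to exploit this by passing to Borel constructions and analysing the Serre spectral sequence of the fibration $\mathcal{K}\times_{\mathbb{Z}/2}\mathcal{L}\to\mathcal{K}/(\mathbb{Z}/2)$ with fibre $\mathcal{L}$, forcing the relevant power of the characteristic class to survive to $E_\infty$; this survival is transparent when $\mathcal{L}$ is highly connected and the real difficulty is to secure it for arbitrary $\mathcal{L}$. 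It is exactly the collapse of this mechanism for groups without periodic cohomology that, as the paper goes on to show, makes the statement false for general $G$ and singles out the cyclic $p$-groups and the generalized quaternion $2$-groups as the only surviving candidates.
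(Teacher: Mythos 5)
You have set out to prove a statement that the paper itself records as an \emph{open conjecture} (attributed to Wrochna), not a theorem: the paper offers no proof of \eqref{eq:key-equation} in general, establishes only the case of index $1$ (Theorem~\ref{Thm: main2}, via the combinatorial cross-index and Lemma~\ref{lem: product-cross}), and shows the analogous statement fails for groups that are not nice. So there is no proof in the paper to match your proposal against, and your text should be judged on its own as a purported proof. As such it has a genuine gap. The easy inequality $\ind(\mathcal{K}\times\mathcal{L})\le\min\{\ind\mathcal{K},\ind\mathcal{L}\}$ via projections is fine (and is exactly Remark~\ref{rem:new} in the paper), and your adjunction set-up is correct: the involution $(\sigma\phi)(y)=-\phi(\sigma y)$ on $M=\operatorname{Map}(\mathcal{L},S^n)$ has the equivariant maps as its fixed points, the adjoint $\hat f$ is equivariant, and monotonicity gives $\ind\mathcal{K}\le\ind M$. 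But the entire content of the conjecture is then concentrated in your ``key lemma'' that a free $M$ satisfies $\ind M\le n$, and you do not prove it; you explicitly flag it as the main obstacle, note that the dimension bound is unavailable because $M$ is infinite-dimensional, note that the Fadell--Husseini index only yields the weaker cohomological bound, observe that the averaging construction breaks down on a locus that is again a free $\mathbb{Z}/2$-space, and gesture at a spectral-sequence argument whose crucial collapse you admit you cannot secure for arbitrary $\mathcal{L}$. That lemma is not a technical detail: it is equivalent in strength to the conjecture itself (it is precisely the exponential-object reformulation from the $\operatorname{Hom}$-complex literature), so deferring it means nothing has been proved.

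Concretely, then: the missing idea is any mechanism that converts freeness of the mapping space $\operatorname{Map}(\mathcal{L},S^n)$ into an upper bound on its mapping index, and every candidate you list is either known to be insufficient (cohomological index, dimension count) or is shown by your own discussion to reduce to the same difficulty one level up (averaging, the Serre spectral sequence of $\mathcal{K}\times_{\mathbb{Z}/2}\mathcal{L}\to\mathcal{K}/(\mathbb{Z}/2)$). If you want a result you can actually complete along the lines of this paper, the realistic targets are the case $n=1$, which the authors prove by replacing $\ind$ with the combinatorial cross-index of face posets of fine subdivisions (Propositions~\ref{prop:no-path} and~\ref{Thm: simplicial approximation theorem} together with Theorem~\ref{Thm: main3}) and then running an explicit path-concatenation argument in comparability graphs, or the case where one factor is tidy ($\ind=\coind$), which is cited from~\cite{DANESHPAJOUH2023105721}. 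For general $n$ the statement remains open, and your proposal does not close it.
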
 

The second author et al~\cite{DANESHPAJOUH2023105721} fully confirmed the version of this conjecture for the homological index of $\mathbb Z/2$-spaces, also they established a slightly weaker form of this result for the free action of prime cyclic groups $\mathbb Z/p$ (with odd prime $p$). Moreover, they showed the generalized form of Conjecture~\ref{conj: product} is valid for the case when one of the factors is an $E_kG$-space. In fact, they verified the equality~\eqref{eq:key-equation} for every pair of $G$-spaces $X$ and $Y$ where $Y$ is a tidy space \footnote{A $G$-space $Y$ is called tidy if $\ind Y =\coind Y$, where $\coind Y$ is the maximum $k$ such that there exists a $G$-equivariant map from $E_kG$ to $Y$.}. So, it is natural to ask whether the equality~\eqref{eq:key-equation} is valid for every pair of $G$-spaces. Unfortunately, it turns out to be not the case for every $G$. To mention our main results in this direction, we need a definition.

\begin{definition}
A finite group $G$ is called a ``nice'' group if either it is a cyclic $p$-group or a generalized quaternion group \footnote{The generalized quaternion group is given by the presentation $Q_{4n}=\langle a,b : a^n=b^2, a^{2n}=1, b^{-1}ab=a^{-1}\rangle$ where $n\ge 2$.} whose size is a power of $2$.
\end{definition}
\begin{remark}
\label{rem:nice-group}
Actually, nice groups are classifications of all finite groups with a unique minimal non-trivial subgroup. Indeed, due to the classical Cauchy theorem such a group must be a $p$-group for some prime $p$, and then one can use~\cite[Theorem 4.10]{gorenstein2007finite} to verify this claim.
\end{remark}

Throughout this paper, for given $G$-spaces $X$ and $Y$, the Cartesian product $X\times Y$ is always considered as a $G$-space equipped with the diagonal action, i.e., $g\cdot (x,y)\mapsto (gx, gy)$. Now, we are in the position to mention the main result of this paper.
\begin{theorem}
\label{Thm: main}
If $G$ is not a nice group, then there are finite free $G$-simplicial complexes $\mathcal{K}_1,\mathcal{K}_2$ so that  $\ind \mathcal{K}_1=\ind \mathcal{K}_2=1$ but $\ind \mathcal{K}_1\times \mathcal{K}_2=0$.
\end{theorem}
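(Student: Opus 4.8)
The plan is to reduce the statement to a purely combinatorial question about $G$-sets by passing to connected components, and then to settle that question group-theoretically using two non-conjugate minimal subgroups of $G$.

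First I would record two elementary facts about the mapping index of a nonempty finite free $G$-simplicial complex $X$. Since $E_0 G = G$ is the discrete free transitive $G$-set, a $G$-map $X \to E_0 G$ is discrete on the target and hence factors through the ($G$-equivariant) component map; it is therefore the same datum as a $G$-map $\pi_0(X) \to G$, and such a map exists exactly when the $G$-set $\pi_0(X)$ is free, because each orbit must be a torsor (stabilizers in $G$ are trivial). Thus $\ind X = 0$ iff $\pi_0(X)$ is a free $G$-set, and in particular $\ind X \ge 1$ iff some component of $X$ has a non-trivial stabilizer. Second, for the Cartesian product one has $\pi_0(\mathcal K_1 \times \mathcal K_2) = \pi_0(\mathcal K_1) \times \pi_0(\mathcal K_2)$ as $G$-sets with the diagonal action. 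Combining these, it suffices to construct finite free $G$-complexes $\mathcal K_1, \mathcal K_2$ with $\ind \mathcal K_i = 1$ whose component sets $S_i := \pi_0(\mathcal K_i)$ are non-free $G$-sets but with $S_1 \times S_2$ free; as the stabilizer of $(s_1,s_2)$ is $\mathrm{Stab}(s_1)\cap\mathrm{Stab}(s_2)$, freeness of the product means all these pairwise intersections are trivial.

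Next I would do the group theory. By Remark~\ref{rem:nice-group}, $G$ non-nice means it has at least two distinct minimal (hence prime-order cyclic) subgroups, and I claim one can always choose two of them, $H_1 \ne H_2$, that are \emph{non-conjugate}. If $G$ is not a $p$-group, Cauchy yields minimal subgroups of two different primes, which are automatically non-conjugate. If $G$ is a non-nice $p$-group, its centre contains a normal subgroup $H_1$ of order $p$; since $G$ has a second subgroup of order $p$, take $H_2$ to be any such: if $H_2$ is central it is a distinct normal subgroup and so non-conjugate to $H_1$, while if $H_2$ is non-central it cannot be conjugate to the central $H_1$. Now set $S_i = G/H_i$. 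Each $S_i$ is non-free since stabilizers are conjugates of $H_i \ne 1$, while for the product any two conjugates $aH_1a^{-1}$ and $bH_2b^{-1}$ are prime-order subgroups which are distinct (otherwise $H_1 \sim H_2$), hence meet trivially; thus $S_1 \times S_2$ is free.

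Finally I would realise $S_i$ as the component set of an index-one complex by induction. Take $\mathcal K_i = G \times_{H_i} E_1 H_i$, the $G$-complex induced from the free $H_i$-complex $E_1 H_i = H_i * H_i$. It is finite and free, and since $E_1 H_i$ is connected its component set is $G/H_i = S_i$, giving $\ind \mathcal K_i \ge 1$ by the first paragraph. For the upper bound I would use the induction--restriction adjunction $\mathrm{Map}_G(G\times_{H_i} C, Y) \cong \mathrm{Map}_{H_i}(C, \mathrm{Res}_{H_i} Y)$: the inclusion $H_i \hookrightarrow G$ induces an $H_i$-equivariant inclusion $E_1 H_i = H_i * H_i \hookrightarrow G * G = \mathrm{Res}_{H_i} E_1 G$, which transposes to a $G$-map $\mathcal K_i \to E_1 G$ and hence gives $\ind \mathcal K_i \le 1$. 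Therefore $\ind \mathcal K_1 = \ind \mathcal K_2 = 1$, while $\pi_0(\mathcal K_1 \times \mathcal K_2) = S_1 \times S_2$ is free, so $\ind(\mathcal K_1 \times \mathcal K_2) = 0$, as required. I expect the only genuine obstacle to be the group-theoretic step of producing two non-conjugate minimal subgroups in every non-nice group, the delicate case being a non-cyclic, non-quaternion $p$-group all of whose minimal subgroups share the prime $p$; there it is the use of the centre, to separate a central order-$p$ subgroup from the others by conjugacy, that makes the construction succeed. The remaining ingredients—the $\pi_0$ characterisation of index $0$, the behaviour of $\pi_0$ under Cartesian product, and the adjunction computation of the upper bound—are routine.
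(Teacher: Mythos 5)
Your argument is correct, but it reaches the theorem by a genuinely different route than the paper. The paper works entirely through the combinatorial invariant $\xind$: it constructs two explicit two-layer free $G$-posets $P_1,P_2$ on $G^{(1)}\cup G^{(2)}$ from generators $h_1,h_2$ of two distinct minimal subgroups, computes $\xind P_i=1$ and $\xind (P_1\times P_2)=0$ via the path criterion of Proposition~\ref{prop:no-path} (this is Lemma~\ref{lem: main lemma}), and then transfers to topology by taking order complexes, using $\Delta(P_1)\times\Delta(P_2)\cong_G\Delta(P_1\times P_2)$ together with Proposition~\ref{lem: xind=0} and Inequality~\eqref{eq: cross vs mapping-index}. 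You instead stay topological throughout: you characterize $\ind X=0$ as freeness of the $G$-set $\pi_0(X)$ (which is in substance Propositions~\ref{lem: xind=0} and~\ref{prop:no-path} combined), realize the non-free $G$-sets $G/H_i$ as $\pi_0$ of the induced complexes $G\times_{H_i}E_1H_i$, and obtain the upper bound $\ind\mathcal K_i\le 1$ from the induction--restriction adjunction. The two constructions are close cousins --- each component of the paper's $\Delta(P_i)$ is a $2|H_i|$-cycle permuted by $G$ with stabilizer $H_i$, so its $\pi_0$ is also $G/H_i$ --- but the frameworks differ, and so does the group-theoretic input: because freeness of $G/H_1\times G/H_2$ requires \emph{every} conjugate of $H_1$ to meet every conjugate of $H_2$ trivially, you need $H_1$ and $H_2$ non-conjugate, a stronger condition than the paper's ``$H_1\cap H_2$ trivial,'' and your centre-based argument that two non-conjugate minimal subgroups always exist in a non-nice group is a necessary and correctly executed extra step. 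What each approach buys: the paper's detour through posets also yields the failure of the Hedetniemi statement for cross-index itself (Lemma~\ref{lem: main lemma}), which is of independent interest and mirrors the positive Lemma~\ref{lem: product-cross}; your version is self-contained, bypasses the cross-index machinery, and makes the mechanism transparent by locating the entire obstruction in $\pi_0$ as a $G$-set.
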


Therefore, the generalized form of Conjecture \ref{conj: product} cannot be valid for every pair of $G$-spaces.
However, we may still hope that the conjecture is valid for every pair of $G$-spaces where $G$ is a nice group. To state this result precisely, first we introduce a statement that it will be used later as well.

\begin{statement1}
For every finite free $G$-simplicial complexes $\mathcal{K}_1$ and $\mathcal{K}_2$,
\[
    \textbf{if}\,\, \ind \mathcal{K}_1= \ind \mathcal{K}_2= n,\,\, \textbf{then}\,\, \ind\mathcal{K}_1\times \mathcal{K}_2 = n.
\]
\end{statement1}

\begin{remark}
\label{rem:new}
It is obvious that, for a fixed group $G$, the necessary condition for the topological Hedetniemi's conjecture being true is that the statement $\mathsf{HCT}_{n}(G)$ must be true for all $n\geq 0$. Actually, this condition is enough as well. To see this, first note that for every pair $\mathcal{K},\mathcal{L}$ of finite free $G$-simplicial complexes, we have 
\[
    \ind \mathcal{K}\times \mathcal{L}\leq \min\{\ind \mathcal{K}, \ind \mathcal{L}\},
\]
as the projection maps $\pi_1:\mathcal{K}\times \mathcal{L}\to \mathcal{K}$ and $\pi_2:\mathcal{K}\times \mathcal{L}\to \mathcal{L}$ are $G$-equivariant maps. So, if the topological Hedetniemi's conjecture is not true for a group $G$, then there are finite free $G$-simplicial complexes $\mathcal{K},\mathcal{L}$ such that 
\[
    \ind \mathcal{K}\geq\ind \mathcal{L}= n,\,\, \textbf{but}\,\, \ind\mathcal{K}\times \mathcal{L} < n,
\]
for some $n\geq 0$. If $\ind \mathcal{K} = n$, then the pair $\mathcal{K},\mathcal{L}$ shows $\mathsf{HCT}_{n}(G)$ is wrong. If not, that is $\ind \mathcal{K} > n$, then we can replace $\mathcal{K}$ with its an equivariant sub-complex $\mathcal{K}^{\prime}$ such that $\ind \mathcal{K}^{\prime}=n$. To see this, note that one can easily build a $G$-equivariant map from the zero-skeleton ${\mathcal{K}}_{0}$ of $\mathcal{K}$ to $G$, in other words $\ind {\mathcal{K}}_{0}=0$. On the other hand, the mapping-index can increases by at most one by passing from the $i$-skeleton ${\mathcal{K}}_{i}$ to $(i+1)$-skeleton ${\mathcal{K}}_{(i+1)}$ of $\mathcal{K}$~\cite[Lemma 11]{daneshpajouh2019}, i.e., $\ind{\mathcal{K}}_{(i+1)}-\ind{\mathcal{K}}_{i}\leq 1$. Thus, there is an $0\leq i\leq \dim \mathcal{K}$ such that $\ind {\mathcal{K}}_{i}=n$. Set, $\mathcal{K}^{\prime}= {\mathcal{K}}_{i}$. Now, the pair $\mathcal{K}^{\prime}, \mathcal{L}$  shows that $\mathsf{HCT}_{n}(G)$ cannot be valid which this verifies the claim. 
\end{remark}
Now, the following result can serve as an evidence that the topological Hedetniemi's conjecture for nice groups might be plausible.
\begin{theorem}
\label{Thm: main2}
Let $G$ be a nice group. If $\mathcal{K}_1,\mathcal{K}_2$ are $G$-simplicial complexes so that  $\ind \mathcal{K}_1=\ind \mathcal{K}_2=1$, then $\ind\mathcal{K}_1\times\mathcal{K}_2=1$. In other words, $\mathsf{HCT}_{1}(G)$ is true.
\end{theorem}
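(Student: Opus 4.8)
The plan is to prove the nontrivial inequality $\ind(\mathcal K_1\times\mathcal K_2)\ge 1$; since the projections $\pi_1,\pi_2$ are $G$-equivariant, Remark~\ref{rem:new} already gives $\ind(\mathcal K_1\times\mathcal K_2)\le\min\{\ind\mathcal K_1,\ind\mathcal K_2\}=1$, and the two bounds together yield the claim. Everything hinges on a combinatorial description of the condition $\ind X=0$. Note that $E_0G=G^{*1}$ is the $0$-dimensional complex with vertex set $G$ on which $G$ acts by left translation, so a simplicial $G$-map $X\to E_0G$ is forced to be constant on each connected component (the target has no edges). Writing $G_C$ for the $G$-stabilizer of a connected component $C$ of a free $G$-complex, I claim that $\ind X=0$ holds exactly when $G_C=\{e\}$ for every component $C$. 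Indeed, if $f\colon X\to G$ is equivariant and $g\in G_C$, then $f$ is constant on $C$ while $gx\in C$ for $x\in C$, so $f(x)=f(gx)=g\,f(x)$ forces $g=e$; conversely, if $G$ permutes the components freely, then choosing one component in each orbit, sending it to $e$ and its $h$-translate to $h$ produces a well-defined equivariant simplicial map. Hence
\[
    \ind X\ge 1 \iff \text{some connected component of } X \text{ has nontrivial stabilizer.}
\]

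It remains to produce a component of $\mathcal K_1\times\mathcal K_2$ with nontrivial stabilizer. Because the product carries the product topology, its components are precisely the sets $C\times D$ with $C$ a component of $\mathcal K_1$ and $D$ a component of $\mathcal K_2$, and the diagonal action sends $C\times D$ to $(gC)\times(gD)$; therefore the stabilizer of $C\times D$ equals $G_C\cap G_D$. Applying the reformulation to the hypotheses $\ind\mathcal K_1=\ind\mathcal K_2=1$, I may fix a component $C$ of $\mathcal K_1$ and a component $D$ of $\mathcal K_2$ with $G_C\neq\{e\}$ and $G_D\neq\{e\}$, and the whole problem reduces to showing $G_C\cap G_D\neq\{e\}$.

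This last step is the only genuinely group-theoretic one, and it is where niceness is used. By Remark~\ref{rem:nice-group} a nice group $G$ has a \emph{unique} minimal nontrivial subgroup $M$. Every nontrivial finite subgroup contains an element of prime order, hence a subgroup of prime order, which is necessarily a minimal nontrivial subgroup of $G$ and so equals $M$; thus $M\subseteq G_C$ and $M\subseteq G_D$, giving $M\subseteq G_C\cap G_D$. The component $C\times D$ therefore has stabilizer containing $M\neq\{e\}$, so $\ind(\mathcal K_1\times\mathcal K_2)\ge 1$, which finishes the argument.

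The conceptual content is the equivalence between niceness and the property that any two nontrivial subgroups of $G$ meet nontrivially; once the characterization of $\ind=0$ is in place, this makes the theorem almost immediate. The points that merit care rather than real difficulty are the verification that a simplicial $G$-map into $G$ is locally constant and assembles correctly on a free orbit of components, and the identification $\pi_0(\mathcal K_1\times\mathcal K_2)=\pi_0(\mathcal K_1)\times\pi_0(\mathcal K_2)$ together with the fact that the diagonal action permutes these product-components by $g\cdot(C\times D)=(gC)\times(gD)$.
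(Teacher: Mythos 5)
Your proof is correct, and it reaches Theorem~\ref{Thm: main2} by a genuinely different and more elementary route than the paper. The paper deduces the theorem from Lemma~\ref{lem: product-cross} (the cross-index statement $\mathsf{HCX}_1(G)$, proved via the path criterion of Proposition~\ref{prop:no-path}) combined with the transfer result Theorem~\ref{Thm: main3}, whose proof needs the equivariant simplicial approximation theorem, the simplicial product $\boxtimes$, and the $G$-homotopy equivalence of Proposition~\ref{pro: the natural map is homotopy}. You instead stay entirely at the topological level: you replace the combinatorial criterion ``$\xind P=0$ iff no path in the comparability graph joins two points of an orbit'' by its topological counterpart ``$\ind X=0$ iff every connected component has trivial stabilizer,'' identify $\pi_0(\mathcal K_1\times\mathcal K_2)$ with $\pi_0(\mathcal K_1)\times\pi_0(\mathcal K_2)$ so that the stabilizer of $C\times D$ is $G_C\cap G_D$, and then invoke the unique minimal subgroup $M$ of a nice group (Remark~\ref{rem:nice-group}) to see that two nontrivial stabilizers must meet. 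The group-theoretic core is the same as the paper's (the shared element $g^*$ in Lemma~\ref{lem: product-cross}), but you bypass all subdivision and approximation machinery. Two small points deserve an explicit word: the converse direction of your characterization of $\ind=0$ needs the components to be open so that a map into the discrete space $G$ that is constant on components is continuous (immediate here, since finite complexes and their finite products have finitely many closed components), and the product-of-components identification uses the product topology on the geometric realizations, which is unproblematic for finite complexes. The trade-off is that the paper's route through Theorem~\ref{Thm: main3} is reusable for every $n$, whereas your argument exploits that ``index $\geq 1$'' is purely a connectivity statement and so is specific to $n=1$.
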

This result was known for the case $G=\mathbb{Z}_2$~\cite{wrochna2019,matsushita2017z2}. It is also worth pointing out that the proofs of Theorems~\ref{Thm: main},~\ref{Thm: main2} are based on a combinatorial analogue of mapping-index, which is called cross-index. In order to define cross-index and also mention our last result, we need some definitions.  

A $G$-poset $(P, \preceq)$ is a partially ordered set with an order preserving  $G$-action on its ground set, i.e.,
$p_1\preceq p_2$ implies $gp_1\preceq gp_2$ for all $g\in G$ and $p_1, p_2\in P$. A $G$-poset is said to be free if $gp=p$ implies $g=e$ for all $p\in P$ and $g\in G$. A $G$-map $\psi : P\to Q$ between $G$-posets $P$ and $Q$ is an order-preserving map, i.e., $\psi (p_1)\preceq \psi(p_2)$ if $p_1\preceq p_2$, which also preserves the action, that is $\psi (gp) = g\psi (p)$ for all $p\in P$ and $g\in G$. For an integer $n\geq 0$, let $Q_nG$ be the $G$-poset on the ground set $G\times\{0,\ldots , n\}$,
with its natural $G$-action, $g\cdot (h, i)\mapsto (gh, i)$, and the order defined by $(g,i)\prec (h, j)$ if $i < j$ in $\mathbb{N}$.
\begin{definition}\footnote{It should be noted that the cross-index for $G$-poset where $G=\mathbb{Z}_2$ and $\mathbb Z/p$ were defined respectively in~\cite{simonyi2013colourful} and~\cite{alishahi2017colorful}.}
For a $G$-poset $P$, the cross-index of $P$, denoted by $\xind P$, is the smallest $n$ such that $P$ admits a $G$-map to $Q_nG$.
\end{definition}
If $P$ and $Q$ are posets, then the product $P\times Q$ is the poset whose elements are all $(p, q)$ such that $p\in P$ and $q\in Q$ and $(p_1,q_1)\preceq (p_2,q_2)$ if $p_1\preceq p_2$ and $q_1\preceq q_2$. Moreover, if $P$ and $Q$ are $G$-poset, then $P\times Q$ is a $G$-poset with the diagonal action, i.e., $g\cdot(p,q)\to (gp, gq)$ for all $(p,q)\in P\times Q$ and all $g\in G$. The face poset $\mathcal{F}(\mathcal{K})$ of a simplicial complex $\mathcal{K}$ is the poset whose vertices are all non-empty simplicies of $\mathcal{K}$ ordered with the inclusion. If $\mathcal{K}$ is a $G$-simplicial complex, then we consider $\mathcal{F}(\mathcal{K})$ as a $G$-poset with the action naturally induced from $\mathcal{K}$. Finally, similar to $\mathsf{HCT}_{n}(G)$, we define an analogues statement for special family of $G$-poset. 
  
\begin{statement2}
    For every finite free $G$-simplicial complexes $\mathcal{K}_1$ and $\mathcal{K}_2$,
\[
    \textbf{if}\,\, \xind\mathcal{F}(\mathcal{K}_1)=\xind\mathcal{F}(\mathcal{K}_2)=n,\,\, \textbf{then}\,\, \xind\mathcal{F}(\mathcal{K}_1)\times \mathcal{F}(\mathcal{K}_2)= n.
\]
\end{statement2}

Now we are in a position to mention our final result.

\begin{theorem}
\label{Thm: main3}  
For every finite group $G$ and non-negative integer $n\geq 0$, $\mathsf{HCX}_{n}(G)$ implies $\mathsf{HCT}_{n}(G)$. 
\end{theorem}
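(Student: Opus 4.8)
The plan is to set up an exact dictionary between the mapping index of a free $G$-simplicial complex and the cross-index of its face poset, under which the statement $\mathsf{HCT}_{n}(G)$ becomes a verbatim instance of $\mathsf{HCX}_{n}(G)$. Concretely, I would prove the bridge identity
\[
    \ind \mathcal{K} = \xind \mathcal{F}(\mathcal{K})
\]
for every finite free $G$-simplicial complex $\mathcal{K}$, together with the product translation
\[
    \ind(\mathcal{K}_1\times \mathcal{K}_2) = \xind\bigl(\mathcal{F}(\mathcal{K}_1)\times \mathcal{F}(\mathcal{K}_2)\bigr).
\]
Granting these, the theorem is immediate: if $\ind\mathcal{K}_1=\ind\mathcal{K}_2=n$ then $\xind\mathcal{F}(\mathcal{K}_1)=\xind\mathcal{F}(\mathcal{K}_2)=n$, so $\mathsf{HCX}_{n}(G)$ gives $\xind(\mathcal{F}(\mathcal{K}_1)\times \mathcal{F}(\mathcal{K}_2))=n$, and the product translation returns $\ind(\mathcal{K}_1\times\mathcal{K}_2)=n$, which is exactly $\mathsf{HCT}_{n}(G)$.

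The two translations rest on three constructions. First, the order complex $\Delta(\mathcal{F}(\mathcal{K}))$ is the barycentric subdivision $\sd\mathcal{K}$, hence $G$-homeomorphic to $\mathcal{K}$; applying $\Delta$ to a $G$-poset map $\mathcal{F}(\mathcal{K})\to Q_n G$ and using $\Delta(Q_nG)=E_nG$ yields a $G$-map $\mathcal{K}\to E_nG$, so $\ind\mathcal{K}\le\xind\mathcal{F}(\mathcal{K})$. Second, there is a ``top vertex'' $G$-poset map $\mathcal{F}(E_nG)\to Q_nG$ sending a simplex of $E_nG=G^{*(n+1)}$ to its vertex of maximal level; composing it with the induced face map of a $G$-simplicial map $L\to E_nG$ shows that any such simplicial map forces $\xind\mathcal{F}(L)\le n$. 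Third, the classical homeomorphism $|\Delta(P\times P')|\cong_G|\Delta(P)|\times|\Delta(P')|$, applied to $P=\mathcal{F}(\mathcal{K}_1)$ and $P'=\mathcal{F}(\mathcal{K}_2)$, identifies $|\mathcal{K}_1\times\mathcal{K}_2|$ $G$-equivariantly with $|\Delta(\mathcal{F}(\mathcal{K}_1)\times\mathcal{F}(\mathcal{K}_2))|$, so that the product translation is just the bridge identity applied to the complex $\Delta(\mathcal{F}(\mathcal{K}_1)\times\mathcal{F}(\mathcal{K}_2))$.

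The substantive content is the reverse inequality $\xind\mathcal{F}(\mathcal{K})\le\ind\mathcal{K}$. Starting from a continuous $G$-map $\mathcal{K}\to E_nG$ realizing $\ind\mathcal{K}=n$, equivariant simplicial approximation produces a $G$-simplicial map $\sd^{N}\mathcal{K}\to E_nG$ for some $N$, whence the top-vertex construction gives $\xind\mathcal{F}(\sd^{N}\mathcal{K})\le n$. Since $\mathcal{F}(\sd^{N}\mathcal{K})=\sd^{N}\mathcal{F}(\mathcal{K})$, it remains to descend from the subdivision back to $\mathcal{F}(\mathcal{K})$, that is, to prove that the cross-index is invariant under barycentric subdivision,
\[
    \xind P=\xind \sd P,
\]
for every finite free $G$-poset $P$. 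Combined with $\ind\le\xind\mathcal{F}$ (applied to $\sd^N\mathcal{K}$, whose realization is $\mathcal{K}$) this pins both sides of the bridge to $n$, and it likewise removes the spurious subdivision in $\mathcal{F}(\Delta(\cdots))=\sd(\cdots)$ that appears in the product translation.

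I expect this subdivision invariance to be the main obstacle. One inequality is free: the ``maximum'' map $\sd P\to P$, $C\mapsto\max C$, is an order-preserving $G$-map, so $\xind\sd P\le\xind P$. The reverse, $\xind P\le\xind\sd P$, is delicate, because there is in general no $G$-poset map $P\to\sd P$, and the naive assignment $p\mapsto f(\{p\})$ built from a $G$-poset map $f\colon\sd P\to Q_mG$ fails to be monotone. My plan here is to reconstruct the descended map from the $G$-invariant monotone level function $\Lambda(p)=\max\{\lambda(f(C)):\max C=p\}$ and then define the group coordinate consistently along the fibers of $\Lambda$; the freeness of the $G$-action on $P$ and the rigid ladder structure of $Q_mG$ (distinct levels are incomparable, while equal levels force equal group labels along a chain) are precisely what should make this reconstruction go through. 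This is the crux; everything else in the argument is formal.
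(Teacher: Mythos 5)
Your reduction has a genuine gap, and it sits exactly where you locate the crux. The ``bridge identity'' $\ind\mathcal{K}=\xind\mathcal{F}(\mathcal{K})$ is not something the paper knows how to prove --- it is essentially Question~\ref{que: que1} of the paper, posed as open --- and the lemma you propose to derive it from, namely $\xind P=\xind\sd P$ for every finite free $G$-poset $P$, is actually \emph{false}. Iterating it would give $\xind P=\xind\sd^{r}P$ for all $r$; since $\sd^{r}P=\mathcal{F}\bigl(\sd^{r-1}(\Delta(P))\bigr)$, Proposition~\ref{Thm: simplicial approximation theorem} forces $\xind\sd^{r}P=\ind\Delta(P)$ for large $r$, and hence $\xind P=\ind\Delta(P)$ for every finite free $G$-poset. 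But the paper records (after Proposition~\ref{prop:no-path}) that inequality~\eqref{eq: cross vs mapping-index} is \emph{not} tight in general, already for $G=\mathbb Z/2$ by the last remark of Simonyi--Tardos. So the non-increasing sequence $\xind\sd^{r}P$ must drop strictly at some step, and your level-function reconstruction of a map $P\to Q_mG$ from a map $\sd P\to Q_mG$ cannot work in general. Your second translation, $\ind(\mathcal{K}_1\times\mathcal{K}_2)=\xind\bigl(\mathcal{F}(\mathcal{K}_1)\times\mathcal{F}(\mathcal{K}_2)\bigr)$, inherits the same problem: Walker's homeomorphism plus Proposition~\ref{Thm: simplicial approximation theorem} only pins down the cross-index of an iterated subdivision of $\mathcal{F}(\mathcal{K}_1)\times\mathcal{F}(\mathcal{K}_2)$, and you have no legitimate way to climb back up to the unsubdivided product poset.

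The fix --- and this is what the paper does --- is to never descend from a subdivision back to the original object. One applies $\mathsf{HCX}_n(G)$ not to $\mathcal{K}_1,\mathcal{K}_2$ but to $\sd^{r_2}(\mathcal{K}_1),\sd^{r_2}(\mathcal{K}_2)$ for $r_2$ large enough that Proposition~\ref{Thm: simplicial approximation theorem} gives $\xind\mathcal{F}(\sd^{r_2}(\mathcal{K}_i))=\ind\mathcal{K}_i=n$; these are perfectly good free $G$-simplicial complexes, so the hypothesis $\mathsf{HCX}_n(G)$ applies to them verbatim. The remaining work is to produce a $G$-map $\mathcal{F}(\sd^{r_2}(\mathcal{K}_1))\times\mathcal{F}(\sd^{r_2}(\mathcal{K}_2))\to Q_mG$ from a continuous $G$-map $\mathcal{K}_1\times\mathcal{K}_2\to E_mG$, and since $\mathcal{K}_1\times\mathcal{K}_2$ is not a simplicial complex this is routed through the simplicial product: the $G$-homotopy equivalence $\mathcal{K}_1\boxtimes\mathcal{K}_2\to\mathcal{K}_1\times\mathcal{K}_2$ and equivariant simplicial approximation (Proposition~\ref{pro: equivariant simplicial approximation theorem for product}), followed by the poset comparison maps of Propositions~\ref{pro: faceposet-products} and~\ref{lem:1}. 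Your ``top vertex'' map and the easy direction $\xind\sd P\le\xind P$ are correct and are exactly Proposition~\ref{lem:1}, but they only ever give the inequalities in the harmless direction; the content of the theorem is carried by the subdivision bookkeeping you tried to shortcut.
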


The organization of the paper is as follows. In Section \ref{sec:cross-index} we discuss the connection between mapping-index and cross-index in more details. Section \ref{sec:proofs} is devoted to the proofs of our main results, i.e., Theorems~\ref{Thm: main},~\ref{Thm: main2}, and~\ref{Thm: main3}. Finally, in the last section we present some open problems.

\section{Cross-Index: a combinatorial analogue of mapping-index}
\label{sec:cross-index}

Here and subsequently, for a given positive integer $r$, the $r$-barycentric subdivision of a simplicial complex $\mathcal{K}$ is denoted by $\sd^r(\mathcal{K})$. We also set $\sd^0(\mathcal{K})=\mathcal{K}$. For a poset $P$ its order complex $\Delta(P)$ is the simplicial complex whose simplicies are all non-empty chains in $P$. If $P$ is a $G$-poset, then we consider $\Delta(P)$ as a $G$-simplicial complex with the induced $G$-action from $P$. Note that any $G$-map $\psi : P\to Q$ between two $G$-posets induces a simplicial $G$-map $\Delta(\psi): \Delta P\to \Delta Q$. Hence, by considering the definitions of 
cross-index, mapping-index and the fact that $\Delta Q_n\cong_{G}E_nG$, i.e., $\Delta Q_n$ is $G$-homeomorphic to $E_nG$, we have 
\begin{equation}
\label{eq: cross vs mapping-index}
   \ind \Delta P \leq \xind P.
\end{equation}

Inequality \eqref{eq: cross vs mapping-index} can be tight, and the following proposition is one particular case.
\begin{proposition}
 \label{lem: xind=0}
 For every free $G$-poset $P$, we have 
\[
    \xind P=0\quad \Longleftrightarrow\quad \ind \Delta (P) = 0.
\]
\end{proposition}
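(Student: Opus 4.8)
The plan is to collapse both quantities onto one purely combinatorial invariant: the $G$-set $\pi_{0}$ of connected components of $\Delta(P)$ (equivalently, the components of the comparability graph of $P$), and to show that each of the two conditions is equivalent to $G$ acting \emph{freely} on $\pi_{0}$.

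First I would dispose of the implication $\xind P=0 \Rightarrow \ind\Delta(P)=0$, which is immediate from the general inequality \eqref{eq: cross vs mapping-index}: it gives $\ind\Delta(P)\le \xind P=0$, and since $\Delta(P)$ is a nonempty free $G$-space its index is nonnegative, so $\ind\Delta(P)=0$. That $\Delta(P)$ is genuinely free is where the freeness of $P$ enters: if $g\neq e$ fixed a chain setwise it would, being order-preserving, restrict to the identity on that finite totally ordered set and hence fix each of its vertices, contradicting freeness.

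For the reverse implication I would first record the standard fact that a finite $G$-set $S$ admits a $G$-equivariant map to $G$ (with the left-regular action) if and only if $G$ acts freely on $S$: the forward direction is the identity $g\cdot s=s\Rightarrow g=e$, and the backward direction is obtained by sending a chosen representative of each orbit to $e$ and extending equivariantly. Now $E_{0}G=G^{*1}$ is precisely the discrete free $G$-set $G$, so a $G$-map $\Delta(P)\to E_{0}G$ is the same as a locally constant, hence component-wise constant, $G$-equivariant function $\Delta(P)\to G$; this descends to a $G$-map $\pi_{0}\to G$, whose existence is equivalent, by the recorded fact, to $G$ acting freely on $\pi_{0}$. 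Symmetrically, $Q_{0}G$ has underlying set $G\times\{0\}$ with the trivial order, i.e.\ it is the antichain $G$; an order-preserving map into an antichain must send comparable elements to equal values, hence is constant along chains and therefore on each component of the comparability graph of $P$. Since the $1$-skeleton of $\Delta(P)$ is exactly that comparability graph, these components are the elements of $\pi_{0}$, and a $G$-map $P\to Q_{0}G$ is again the same data as a $G$-map $\pi_{0}\to G$. Chaining the two unwindings yields $\ind\Delta(P)=0 \Leftrightarrow G \text{ acts freely on } \pi_{0} \Leftrightarrow \xind P=0$.

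The step I expect to require the most care is not conceptual but the verification of the two reductions to $\pi_{0}$: checking that an order-preserving map into an antichain is forced to be constant on each comparability component (via transitivity of the equivalence relation generated by comparability), and that the connected components of the order complex really coincide with those of the comparability graph. Once both reductions are in hand the equivalence is transparent and, as a bonus, identifies the common value $0$ with freeness of the induced $G$-action on $\pi_{0}(\Delta P)$.
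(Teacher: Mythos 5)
Your proof is correct and follows essentially the same route as the paper's: the forward implication is the general inequality $\ind \Delta(P)\le\xind P$, and the reverse implication rests on exactly the observation the paper uses, namely that a $G$-map to the discrete space $E_0G=G$ is constant on connected components while an order-preserving $G$-map to the antichain $Q_0G$ is constant on comparability components, and these components coincide. Your repackaging of both conditions as ``$G$ acts freely on $\pi_0$'' is a harmless elaboration rather than a different argument (and it essentially recovers the $n=0$ criterion of Proposition~\ref{prop:no-path}).
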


\begin{proof}
  If $\xind(P)=0$ then $\ind \Delta(P) = 0$  by Inequality~\eqref{eq: cross vs mapping-index}. For the other direction, if $\ind \Delta(P) = 0$, then there is a $G$-equivariant map $\psi : \Delta(P)\to G$. The map $\psi$ sends each (path)-connected component of $\Delta(P)$ to a single point of $G$ as $\psi$ is continuous and $G$ has the discrete topology. This shows that the natural induced map $\bar{\psi}: P\to G\times[0]$, which sends $p$ to $(\psi(p), 0)$, is an order preserving as any two comparable elements in $P$ lies in a same path-component of $\Delta(P)$. Clearly $\bar{\psi}$ preserves the $G$-action as $\psi$ does. Hence, $\bar{\psi}$ is a $G$-map, and therefore $\xind P=0$. Now, the proof is complete.
\end{proof}

From a computational viewpoint, deciding whether the cross-index of a given $G$-poset is zero is an ``easy task''. Indeed, the purpose of next proposition is to establish this fact. Before that, let us remind the definition of comparability graph.
\begin{definition}
    The comparability graph of a poset $P$ is an \emph{undirected} graph whose vertices are elements of $P$ and there is an edge between vertices $u,v$ if and only if $u$ and $v$ are comparable in $P$, i.e., $u\preceq v$ or $v\preceq u$.
\end{definition}

\begin{proposition} \label{prop:no-path}
    For any finite free $G$-poset $P$, we have $\xind P=0$ if and only if there is no path between two elements of the same orbit in the comparability graph.
\end{proposition}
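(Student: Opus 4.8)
The plan is to characterize when $\xind P = 0$ in purely combinatorial terms using the comparability graph. Recall from Proposition~\ref{lem: xind=0} that $\xind P = 0$ if and only if there is a $G$-map $\bar\psi : P \to G \times \{0\} = Q_0G$. Since $Q_0G$ is an antichain (all elements sit at level $0$, so no two distinct elements are comparable), an order-preserving map $\bar\psi$ must send any two comparable elements of $P$ to the \emph{same} element of $G$. Consequently $\bar\psi$ must be constant on each connected component of the comparability graph of $P$: if $u$ and $v$ are joined by an edge they are comparable, hence $\bar\psi(u)=\bar\psi(v)$, and this propagates along any path.

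First I would prove the forward direction. Assume $\xind P = 0$ and fix the $G$-map $\bar\psi : P \to Q_0 G$. Suppose for contradiction that two elements $p$ and $gp$ of the same $G$-orbit (with $g \neq e$) lie in the same component of the comparability graph. Then $\bar\psi(p) = \bar\psi(gp)$ by the constancy observation above. But $\bar\psi$ is a $G$-map, so $\bar\psi(gp) = g\,\bar\psi(p)$. Writing $\bar\psi(p) = (h,0)$, this forces $(h,0) = (gh,0)$, i.e.\ $gh = h$, hence $g = e$, contradicting $g \neq e$. Thus no path in the comparability graph connects two distinct elements of a common orbit.

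For the converse, I would construct the desired $G$-map directly from the hypothesis. Assume no path joins two distinct elements of the same orbit. Let $C_1, \dots, C_m$ be the connected components of the comparability graph. The hypothesis says each $C_i$ meets every $G$-orbit in at most one point; equivalently, the $G$-action on $P$ restricts to an action permuting these components with the property that no nontrivial $g$ fixes a component \emph{pointwise while stabilizing it}—more usefully, within each orbit of components one may choose representatives and define $\bar\psi$ component-by-component. Concretely, pick one component from each orbit of components under the induced $G$-action, assign its elements a fixed value $(e,0) \in Q_0G$, and extend $G$-equivariantly by setting $\bar\psi(gp) = g \cdot \bar\psi(p)$. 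The key point to check is that this is well-defined: if $gp$ and $g'p$ fall in the same chosen component with $\bar\psi$ already assigned, the hypothesis (no path between $gp$ and $g'p$ unless they are equal, since they share the orbit of $p$) guarantees $g = g'$ on that component, so no conflict arises. Order-preservation is automatic because comparable elements share a component and hence receive equal values, making every order relation map to an equality in $Q_0G$, which is vacuously order-preserving.

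The main obstacle I expect is verifying well-definedness of the equivariant extension in the converse, precisely because the action on $P$ need not be free as an action on the set of components, and two group elements could a priori carry the same base component to the same target. The freeness of the $G$-poset together with the ``at most one element per orbit per component'' consequence of the hypothesis is exactly what rules this out, and I would make that bookkeeping explicit by working orbit-of-components by orbit-of-components, using the stabilizer of a chosen component to detect and exclude any collision. Once well-definedness is secured, the verification that $\bar\psi$ is both order-preserving and $G$-equivariant is routine, and Proposition~\ref{lem: xind=0} is not even needed for the converse since we have produced a $G$-map to $Q_0G$ outright.
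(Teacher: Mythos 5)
Your proof is correct and follows essentially the same route as the paper's: the forward direction uses constancy of any order-preserving map into the antichain $Q_0G$ on components plus freeness of the action, and the converse builds the $G$-map component-by-component exactly as the paper does (your extra care with component stabilizers is a slightly more explicit version of the paper's observation that orbit elements lie in distinct components). One trivial nit: the existence of a $G$-map $P\to Q_0G$ when $\xind P=0$ is just the definition of cross-index, not Proposition~\ref{lem: xind=0}, which instead relates $\xind P$ to $\ind\Delta(P)$.
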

Note that this result is already known for the case $G=\mathbb Z_2$ (see the proof of \cite[Theorem 9]{simonyi2013colourful}).
\begin{proof}
    Suppose there is no such path in the comparability graph, we prove $\xind P=0$ by establishing a valid $G$-map $\psi: P\to Q_0G$. 
    At first, we take an arbitrary element $x_0\in P$ and assign $\psi(x_0)=(e,0)$. Then there is a unique way to extend the map to the orbit $[x_0]$ of $x_0$ so that the $G$-action is preserved, i.e., $\psi(gx_0)=(g,0)$ for any $g\in G$. 
    Note that the elements $x$ of $[x_0]$ lie in different components $\Gamma_x$ of the comparability graph of $P$, due to the condition on the paths.
    Now, for each component $\Gamma_x$, we assign $\psi(y)=\psi(x)$ for every $y\in\Gamma_x$.
    If there is any element of $P$ that has not been assigned yet, we continue the same procedure for such an element, and recursively do it until there is no remaining unassigned element. The final function $\psi$ is a valid $G$-map since the $G$-action and the order are both preserved. 

    For the other direction, first note that any order-preserving $\psi: P\to Q_0G$ must be constant on each component of the comparability graph of $P$. So, if two distinct elements of the same orbit lie in the same component of $P$, then such a map cannot preserve the action anymore and hence it is not a $G$-map. Therefore, there is no path between two elements of the same orbit when the cross-index is $0$.
\end{proof}

We should note that Inequality \eqref{eq: cross vs mapping-index} is not tight in general. For the case $G=\mathbb Z/2$ see the last remark in~\cite{simonyi2013colourful}. However, if we subdivide $\Delta P$ enough, then the cross-index of the face poset of that refinement matches with the mapping-index of $\Delta P$. To verify this claim, let us start with the following easy observation that it is needed for the proof.

\begin{proposition}
\label{lem:1}
For any finite $G$-poset $P$ and any $r\geq 0$, there is a $G$-map from 
$\mathcal{F}\left(\sd^r(\Delta(P))\right)$ to $P$.
\end{proposition}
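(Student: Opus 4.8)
The plan is to reduce everything to a single elementary construction---sending a chain to its largest element---and then to iterate it along the tower of barycentric subdivisions. The starting point is the standard identification $\sd(\mathcal{K}) = \Delta(\mathcal{F}(\mathcal{K}))$, valid for every finite simplicial complex $\mathcal{K}$ and moreover $G$-equivariant when $\mathcal{K}$ is a $G$-simplicial complex: the vertices of $\sd(\mathcal{K})$ are the non-empty faces of $\mathcal{K}$ and its simplices are the chains of faces ordered by inclusion, which is exactly the description of $\Delta(\mathcal{F}(\mathcal{K}))$, with the induced $G$-actions agreeing on both sides.

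First I would isolate the key lemma: for an arbitrary finite $G$-poset $Q$, the map $\mu_Q\colon \mathcal{F}(\Delta(Q)) \to Q$ that sends a simplex of $\Delta(Q)$---that is, a non-empty chain $C = (q_0 \prec \cdots \prec q_k)$ in $Q$---to its maximum $\max C = q_k$ is a $G$-map. The maximum exists and is unique because every such chain is a finite, totally ordered, non-empty set. Order preservation is immediate: if $C \subseteq C'$ as faces of $\Delta(Q)$, then $\max C \in C'$, so $\max C \preceq \max C'$. Equivariance is equally immediate, since $G$ acts on $Q$ by order-preserving bijections and therefore commutes with taking maxima, giving $\max(g\cdot C) = g\cdot \max C$ for all $g\in G$.

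With the lemma in hand I would conclude by induction on $r$. For $r=0$ the claim is precisely the map $\mu_P\colon \mathcal{F}(\Delta(P)) \to P$. For $r \geq 1$, applying $\sd(\mathcal{K}) = \Delta(\mathcal{F}(\mathcal{K}))$ with $\mathcal{K} = \sd^{r-1}(\Delta(P))$ gives $\sd^r(\Delta(P)) = \Delta(\mathcal{F}(\sd^{r-1}(\Delta(P))))$. Setting $Q := \mathcal{F}(\sd^{r-1}(\Delta(P)))$, we obtain $\mathcal{F}(\sd^r(\Delta(P))) = \mathcal{F}(\Delta(Q))$, so $\mu_Q$ furnishes a $G$-map onto $Q = \mathcal{F}(\sd^{r-1}(\Delta(P)))$. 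Composing $\mu_Q$ with the $G$-map $\mathcal{F}(\sd^{r-1}(\Delta(P))) \to P$ supplied by the induction hypothesis yields the desired $G$-map, since a composition of order-preserving equivariant maps is again such a map.

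I do not expect a genuine obstacle here; the content is bookkeeping rather than difficulty. The only points that require care are the $G$-equivariant identification $\sd(\mathcal{K}) = \Delta(\mathcal{F}(\mathcal{K}))$, so that the poset $Q$ arising in the inductive step is exactly the face poset of the previous subdivision, and the observation that equivariance of $\mu_Q$ is automatic because $G$ acts by poset automorphisms. Everything else is a straightforward verification that passing to the top of a chain respects both the inclusion order and the $G$-action.
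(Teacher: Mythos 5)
Your proof is correct and follows the same route as the paper's: the paper likewise reduces to the case $r=0$ via the identification $\sd(\mathcal{K})=\Delta(\mathcal{F}(\mathcal{K}))$ and defines the $G$-map by sending each chain to its maximum element. You have simply made explicit the induction and the equivariance checks that the paper leaves to the reader.
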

\begin{proof}
It suffices to show this for $r=0$. Define $\psi : \mathcal{F}(\Delta(P))\to P$ 
by sending each $A\in \sd (P)$ to the maximum element of $A$. It is easy to check that $\psi$ is a $G$-map.
\end{proof}
\begin{proposition}\label{Thm: simplicial approximation theorem}
For each finite free $G$-poset $P$, there is an $r_0\geq 0$ such that for any $r\geq r_0$:
\[
    \xind\mathcal{F}\left(\sd^r(\Delta(P))\right) = \ind \bigtriangleup (P).
\]
\end{proposition}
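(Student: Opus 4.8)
The plan is to prove the two inequalities separately and to observe that a finite threshold $r_0$ is needed only for one of them. For the lower bound, which in fact holds for every $r\geq 0$, I would apply Inequality~\eqref{eq: cross vs mapping-index} to the $G$-poset $\mathcal{F}(\sd^r(\Delta(P)))$, obtaining
\[
    \ind \Delta\bigl(\mathcal{F}(\sd^r(\Delta(P)))\bigr) \leq \xind \mathcal{F}(\sd^r(\Delta(P))).
\]
Since the order complex of the face poset of a simplicial complex is precisely its barycentric subdivision, i.e.\ $\Delta(\mathcal{F}(\mathcal{L})) = \sd(\mathcal{L})$, the left-hand side equals $\ind \sd^{r+1}(\Delta(P))$. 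As barycentric subdivision is a $G$-homeomorphism on the underlying space and the mapping-index depends only on the $G$-homeomorphism type, we have $\ind \sd^{r+1}(\Delta(P)) = \ind \Delta(P)$, so $\ind \Delta(P) \leq \xind \mathcal{F}(\sd^r(\Delta(P)))$ for all $r$.

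For the reverse inequality, set $n = \ind \Delta(P)$, so there is a $G$-equivariant map $f : \Delta(P) \to E_nG$. Using the $G$-homeomorphism $E_nG \cong_{G} \Delta(Q_nG)$, I regard $f$ as a continuous $G$-map into the order complex $\Delta(Q_nG)$. The key step is to apply the equivariant simplicial approximation theorem: since $P$ is free, the action on $\Delta(P)$ and on all its subdivisions is free and simplicial, and for such actions $f$ admits, after subdividing the domain sufficiently, a $G$-simplicial approximation. This produces an $r_0 \geq 0$ and, for every $r \geq r_0$, a simplicial $G$-map $g : \sd^r(\Delta(P)) \to \Delta(Q_nG)$ (the threshold persists upward because finer subdivisions only make the star condition easier to satisfy).

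It then remains to convert $g$ into a $G$-poset map into $Q_nG$. Passing to face posets, $g$ induces a $G$-map $\mathcal{F}(g) : \mathcal{F}(\sd^r(\Delta(P))) \to \mathcal{F}(\Delta(Q_nG))$ sending a simplex $\sigma$ to $g(\sigma)$; this is order-preserving, since $\sigma \subseteq \tau$ implies $g(\sigma) \subseteq g(\tau)$, and $G$-equivariant, since $g$ is. Composing with the $G$-map $\mathcal{F}(\Delta(Q_nG)) \to Q_nG$ furnished by Proposition~\ref{lem:1} in the case $r=0$ (which sends a chain to its maximum element), I obtain a $G$-map $\mathcal{F}(\sd^r(\Delta(P))) \to Q_nG$. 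This witnesses $\xind \mathcal{F}(\sd^r(\Delta(P))) \leq n = \ind \Delta(P)$, and together with the lower bound it gives the claimed equality for all $r \geq r_0$.

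I expect the main obstacle to be the equivariant version of simplicial approximation, namely ensuring that the approximating map $g$ can be chosen $G$-equivariant rather than merely continuous-equivariant. For free simplicial actions this can be arranged by descending to quotients: $f$ induces $\bar f : \Delta(P)/G \to \Delta(Q_nG)/G$, ordinary simplicial approximation yields a simplicial map out of a high enough subdivision, and one lifts it along the (free, hence covering) quotient map. The care required lies in checking that the relevant quotients are genuine simplicial complexes and that taking the free quotient commutes with $\sd^r$—both of which hold after finitely many subdivisions—and that the lift is simultaneously simplicial and equivariant; alternatively one may simply cite an existing equivariant simplicial approximation theorem for free $G$-complexes.
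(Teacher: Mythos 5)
Your proof is correct and follows essentially the same route as the paper: the lower bound via Inequality~\eqref{eq: cross vs mapping-index} together with the invariance of the mapping-index under barycentric subdivision, and the upper bound via equivariant simplicial approximation followed by passing to face posets and composing with the map of Proposition~\ref{lem:1}. The only cosmetic difference is that the paper propagates the upper bound from $r_0$ to all $r\geq r_0$ using the monotonicity of the sequence $\xind\mathcal{F}(\sd^r(\Delta(P)))$ (again via Proposition~\ref{lem:1}), whereas you invoke the fact that the simplicial approximation itself persists for finer subdivisions; both are valid.
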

\begin{proof}
First note that, Proposition~\ref{lem:1} shows that the sequence $\{\xind\mathcal{F}(\sd^n (\Delta(P))\}_{n}$
is a decreasing sequence. Inequality~\eqref{eq: cross vs mapping-index} implies that each term of this sequence is bounded from below by $\ind\Delta (P)$ as we have $\ind\sd^{n} (\Delta(P))=\ind \Delta(P)$ for every $n\geq 0$. The latter claim follows from that fact that every $G$-simplicial complex is $G$-homeomorphic to its barycentric subdivision. 

Now, let $\ind \Delta (P) = m$. So, there is a $G$-map $\psi : \Delta (P)\to\ E_mG\cong_{G}\Delta Q_m$. By the equivariant version of simplicial approximation theorem, there exists an $r_0\geq 0$ and a $G$-equivariant simplicial
map $g : \sd^{r_0}(\Delta (P))\to \Delta (Q_m)$. This map induces a $G$-map from $\mathcal{F}(\sd^{r_0}(\Delta (P))$ into $\mathcal{F}( \Delta (Q_m))$. But by Proposition \ref{lem:1}, there is a $G$-map from $\mathcal{F}( \Delta (Q_m))$ to $Q_m$. Combining these $G$-maps defines a $G$-map from $\mathcal{F}(\sd^{r_0}(\Delta (P)$ to $Q_m$, and hence $\xind \mathcal{F}(\sd^{r_0} (\Delta(P))\leq \ind\Delta (P)$ which implies $\xind \mathcal{F}(\sd^r (\Delta(P))=\ind\Delta (P)$ for every $r\geq r_0$ by the earlier claimed established in the beginning of the proof.
\end{proof}
\section{Proofs of Main Results}
\label{sec:proofs}
Before proving Theorem \ref{Thm: main}, we need the following lemma, which also shows that the Hedetniemi conjecture for cross-index is not true in general.
\begin{lemma}
\label{lem: main lemma}
    If $G$ is not a nice group, then there are finite free $G$-posets $P_1,P_2$ with $\xind P_1=\xind P_2=1$ but $\xind P_1\times P_2 =0$.
\end{lemma}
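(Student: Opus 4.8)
The plan is to work entirely on the combinatorial side, using the comparability-graph criterion of Proposition~\ref{prop:no-path}: a finite free $G$-poset $P$ has $\xind P=0$ exactly when no two distinct elements of a common orbit are joined by a path in its comparability graph. So I must produce $P_1,P_2$ in which some orbit is ``connected'' (forcing $\xind\geq 1$, then matched by an explicit map to $Q_1G$ to force equality), while in $P_1\times P_2$ every orbit is ``disconnected''. The engine for the last point is a holonomy argument: I attach to each $P_i$ a $G$-equivariant label $\phi_i\colon P_i\to G$ such that every comparability edge changes $\phi_i$ only by an element of a fixed prime-order subgroup $H_i\leq G$. A path from $s$ to $g\cdot s$ then forces $g$ into a conjugate of $H_i$ in each coordinate, so the whole construction reduces to the group-theoretic statement
(*) $g_1H_1g_1^{-1}\cap g_2H_2g_2^{-1}=\{e\}$ for all $g_1,g_2\in G$, for a suitable pair of prime-order subgroups $H_1,H_2$.

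Next I would secure (*) from the hypothesis that $G$ is not nice. By Remark~\ref{rem:nice-group} a non-nice $G$ lacks a unique minimal non-trivial subgroup, hence (being non-trivial) has two distinct minimal subgroups, each necessarily of prime order. I split into two cases. If $G$ is not a $p$-group, Cauchy's theorem yields subgroups $H_1,H_2$ of distinct prime orders $p\neq q$; any conjugate of $H_1$ has order $p$ and of $H_2$ order $q$, so their intersection has order dividing $\gcd(p,q)=1$, giving (*). If $G$ is a $p$-group, then it is neither cyclic nor generalized quaternion, so by the classification it has more than one subgroup of order $p$; here the crucial point is to choose them \emph{non-conjugate}. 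Since $Z(G)\neq\{e\}$ is a $p$-group, it contains a central subgroup $H_1$ of order $p$, which is normal and thus alone in its conjugacy class; taking any order-$p$ subgroup $H_2\neq H_1$ makes $H_1,H_2$ non-conjugate. For non-conjugate order-$p$ subgroups a nontrivial intersection of two conjugates would be all of both, forcing $H_1\sim H_2$, a contradiction; so (*) holds again.

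With $H_i=\langle a_i\rangle$ fixed, I would build the \emph{crown} $G$-poset $P_i$ on ground set $\{b^{(i)}_g,t^{(i)}_g:g\in G\}$, with $G$ acting freely by left translation on indices and with the only strict relations $b^{(i)}_g\prec t^{(i)}_g$ and $b^{(i)}_{ga_i}\prec t^{(i)}_g$. This is a height-one poset, so transitivity is automatic. The path $b^{(i)}_e\prec t^{(i)}_e\succ b^{(i)}_{a_i}=a_i\cdot b^{(i)}_e$ joins two distinct elements of one orbit, whence $\xind P_i\geq 1$ by Proposition~\ref{prop:no-path}; and the assignment $b^{(i)}_g\mapsto(g,0)$, $t^{(i)}_g\mapsto(g,1)$ is an order-preserving $G$-map into $Q_1G$, so $\xind P_i\leq 1$. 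Hence $\xind P_1=\xind P_2=1$. The labels are $\phi_i(b^{(i)}_g)=\phi_i(t^{(i)}_g)=g$, and one checks directly that each comparability edge changes $\phi_i$ by $e$ or $a_i^{\pm1}\in H_i$.

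Finally I would analyze $P_1\times P_2$ with the diagonal action, which is free since each factor is. Every comparability edge of the product projects in each coordinate to an equality or a comparability edge of the factor, hence changes $\phi_1$ by an element of $H_1$ and $\phi_2$ by an element of $H_2$. Consequently, along any path from $s$ to $g\cdot s$ one obtains $\phi_i(s)^{-1}g\,\phi_i(s)\in H_i$, i.e.\ $g\in \phi_1(s)H_1\phi_1(s)^{-1}\cap\phi_2(s)H_2\phi_2(s)^{-1}$, which is $\{e\}$ by (*). Thus no distinct pair of same-orbit elements is joined by a path, and Proposition~\ref{prop:no-path} yields $\xind(P_1\times P_2)=0$. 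I expect the genuine obstacle to be exactly the conjugacy subtlety in the $p$-group case: a naive pair of order-$p$ subgroups may have conjugate copies that coincide, which would let the holonomy element $g$ be nontrivial and potentially connect an orbit in the product; routing around this through a \emph{central} subgroup is precisely what makes (*), and hence the whole argument, go through.
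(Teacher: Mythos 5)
Your proposal is correct, and its skeleton coincides with the paper's: the same height-one ``two copies of $G$'' posets (the paper's $P_i$ has ground set $G^{(1)}\cup G^{(2)}$ with $g^{(1)}\preceq g^{(2)}$ and $g^{(1)}\preceq (gh_i)^{(2)}$, which is your crown up to relabelling), the same explicit map to $Q_1G$ for the upper bound, and the same path-tracking in the product via Proposition~\ref{prop:no-path}. The one genuine difference is your treatment of conjugacy, and it is a real improvement rather than a detour. The paper takes \emph{any} two minimal nontrivial subgroups $H_1,H_2$ and asserts that a path from $(x_1,y_1)$ to $g\cdot(x_1,y_1)$ forces $g\in H_1\cap H_2=\{e\}$; but since the order relation is $g^{(1)}\preceq (gh_i)^{(2)}$ while the action is left translation, consecutive elements along a path differ by \emph{right} multiplication by $h_i^{\pm1}$, so the path really yields $x_k\in x_1H_1$ and hence $g\in x_1H_1x_1^{-1}\cap y_1H_2y_1^{-1}$. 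If some conjugate of $H_1$ coincides with some conjugate of $H_2$ (as happens for two transpositions in $S_3$), the paper's verification as written does not close. Your condition (*) --- secured by taking subgroups of distinct prime orders when $G$ is not a $p$-group, and a central order-$p$ subgroup together with any other order-$p$ subgroup when it is --- is exactly what is needed, and your holonomy labels $\phi_i$ make the coset bookkeeping transparent. In short, you reproduced the paper's construction and quietly repaired a slip in its proof; the only cost is a short case analysis to choose $H_1,H_2$ well.
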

\begin{proof}
    Since $G$ is not a nice group, it contains two minimal nontrivial subgroups $H_1,H_2$ (see Remark \ref{rem:nice-group}). Since $H_1,H_2$ are minimal and nontrivial, there are non-identity elements $h_1\in H_1,h_2\in H_2$ so that $H_1,H_2$ are generated by $h_1,h_2$, respectively. Note that the intersection of $H_1$ and $H_2$ is trivial. 
    
    Let $P_1$ be the $G$-poset whose set of elements is $G^{(1)}\cup G^{(2)}$ where each of $G^{(1)}, G^{(2)}$ is a copy of $G$ with the order defined as follows. First, denote by $g^{(i)}$ the corresponding element to $g\in G$ in $G^{(i)}$ for $i=1,2$.
    Then, we let $g^{(1)}\preceq g^{(2)}$ for each $g\in G$. After that, we let $e^{(1)}\preceq {h_1}^{(2)}$ and extend it minimally, that is we let $ge^{(1)}\preceq gh_1^{(2)}$ for any $g\in G$. (Note that the $G$-action is the natural one with $gh^{(i)} = (gh)^{(i)}$ for any $g,h\in G$ and $i=1,2$.)
    
    We construct $P_2$ in the same way except that we extend $e^{(1)}\preceq {h_2}^{(2)}$ instead. In Fig. \ref{fig:main-lemma}, we illustrate an example of $P_1,P_2$ with the group $G=\mathbb Z_2\times\mathbb Z_2=\{\underbrace{(0,0)}_{e},\underbrace{(1,0)}_{h_1},\underbrace{(0,1)}_{h_2},\underbrace{(1,1)}_{h_3}\}$.
    
    \begin{figure}[ht]
    \centering
    \includegraphics[width=0.8\textwidth]{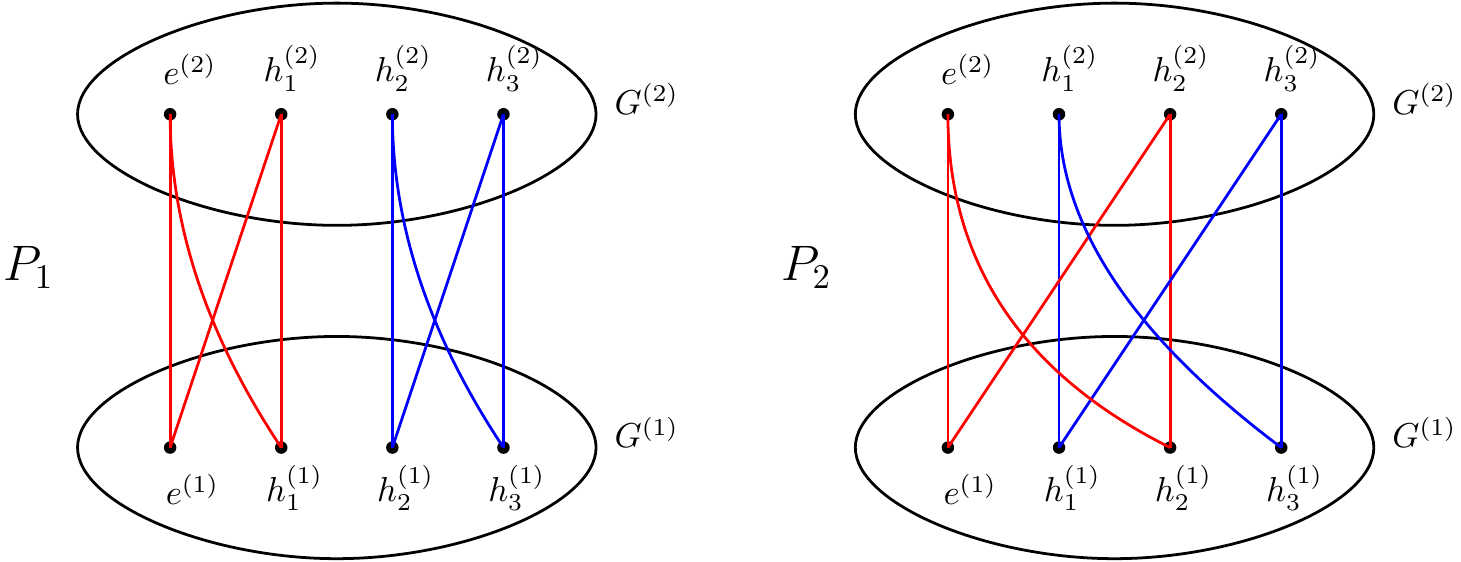}
    \caption{An example of two posets $P_1,P_2$ in the proof of Lemma \ref{lem: main lemma}}
    \label{fig:main-lemma}
    \end{figure}

    By Proposition~\ref{prop:no-path}, the cross-indices of $P_1, P_2$ are nonzero, because the path $e^{(1)}\to {h_i}^{(2)}\to {h_i}^{(1)}$ connects two elements $e^{(1)}, {h_i}^{(1)}$ of the same orbit for $i=1,2$.
    
    Furthermore, we show that $\xind P_1=\xind P_2=1$. Indeed, the function $\psi_j: P_j\to Q_1G$ that sends each $g^{(i)}$ to $(g, i-1)$ is a valid $G$-map for $j=1,2$. (In fact, the same conclusion holds for any poset of two orbits.)

    It remains to prove $\xind P_1\times P_2=0$.
    Assume otherwise that $\xind P_1\times P_2\ge 1$, we derive a contradiction.   By Proposition \ref{prop:no-path}, there is a path
    \[
        ({x_1}^{(i_1)},{y_1}^{(j_1)})\to ({x_2}^{(i_2)},{y_2}^{(j_2)})\to \dots\to ({x_k}^{(i_k)},{y_k}^{(j_k)})
    \]
    between two elements of the same orbit in the comparability graph.
    Since ${x_t}^{(i_t)}$ and ${x_{t+1}}^{(i_{t+1})}$ are comparable for every $t$, we have either $x_{t+1}=x_t$, or $x_{t+1}=h_1 x_t$, or $x_{t+1}=h_1^{-1} x_t$.
    That is $x_1$ and $x_k$ lie in the same coset of $H_1$.
    Likewise, $y_1$ and $y_k$ lie in the same coset of $H_2$. 
    Let $x_k=h' x_1$ and $y_k=h'' y_1$, where $h'\in H_1$ and $h''\in H_2$. As $({x_1}^{(i_1)},{y_1}^{(j_1)}),({x_k}^{(i_k)},{y_k}^{(j_k)})$ are in the same orbit, we have $h'=h''$. Since the intersection of $H_1$ and $H_2$ is trivial, we obtain $h'=h''=e$. However, it means $({x_1}^{(i_1)},{y_1}^{(j_1)}),({x_k}^{(i_k)},{y_k}^{(j_k)})$ are identical, contradiction.
\end{proof}

We can now prove Theorem \ref{Thm: main}.
\begin{proof}[Proof of Theorem \ref{Thm: main}]
 Let $P_1$ and $P_2$ be the $G$-posets which were defined in Lemma~\ref{lem: main lemma}. Set $\mathcal{K}_1=\Delta(P_1), \mathcal{K}_2=\Delta(P_2)$. Proposition~\ref{lem: xind=0} and Inequality~\eqref{eq: cross vs mapping-index} imply $\ind \mathcal{K}_1=\ind \mathcal{K}_2=1$. On the other hand, $\mathcal{K}_1\times \mathcal{K}_2= \Delta(P_1)\times \Delta(P_2)$ is $G$-homeomorphic to $\Delta (P_1\times P_2)$~\cite{walker1988canonical}
 which implies
 \[
     \ind \mathcal{K}_1\times \mathcal{K}_2=\ind \Delta (P_1\times P_2)\leq \xind P_1\times P_2=0.\qedhere
 \]
\end{proof}

In order to prove Theorem \ref{Thm: main2}, we again start with a combinatorial version.
\begin{lemma}
\label{lem: product-cross}
	Let $G$ be a nice group. If $P$ and $Q$ are finite free $G$-posets with $\xind P =\xind Q=1$, then $\xind P\times Q=1$. In particular, $\mathsf{HCX}_{1}(G)$ is true.
\end{lemma}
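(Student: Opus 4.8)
The plan is to establish $\xind P\times Q = 1$ via the two inequalities $\xind P\times Q\le 1$ and $\xind P\times Q\ge 1$, of which only the second uses that $G$ is nice; the concluding assertion that $\mathsf{HCX}_{1}(G)$ holds is then the special case $P=\mathcal{F}(\mathcal{K}_1)$, $Q=\mathcal{F}(\mathcal{K}_2)$. The upper bound is immediate: the projection $\pi_1\colon P\times Q\to P$ is a $G$-map, so precomposing with $\pi_1$ a $G$-map $P\to Q_1G$ that witnesses $\xind P=1$ yields a $G$-map $P\times Q\to Q_1G$; hence $\xind P\times Q\le\min\{\xind P,\xind Q\}=1$. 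Everything therefore reduces to showing $\xind P\times Q\ne 0$.

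For the lower bound I would use Proposition~\ref{prop:no-path} in both directions: the hypothesis $\xind P=1\ne 0$ means there is an element $p_0$ and a path in the comparability graph of $P$ from $p_0$ to $a p_0$ for some $a\ne e$, and the goal is to produce a path in the comparability graph of $P\times Q$ between two distinct elements of a single orbit. The first substantive step is to consider the set $H_P:=\{g\in G: \text{ some comparability-path joins } p_0 \text{ to } g p_0\}$ and to check that it is a \emph{subgroup} of $G$: since every $g\in G$ acts on $P$ as an order automorphism, hence as an automorphism of the comparability graph, one closes $H_P$ under products by translating and concatenating paths, and under inverses by reversing a path. As $a\in H_P$, the subgroup $H_P$ is nontrivial, and the analogous subgroup $H_Q$ attached to $Q$ is nontrivial as well.

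Niceness now enters through Remark~\ref{rem:nice-group}: a nice group has a unique minimal nontrivial subgroup $Z$, and consequently every nontrivial subgroup contains $Z$. Hence $Z\subseteq H_P\cap H_Q$, so I may fix one nonidentity $z\in Z$ together with a comparability-path $p_0,\dots,z p_0$ in $P$ and a comparability-path $q_0,\dots,z q_0$ in $Q$. The concluding observation is that freezing a coordinate preserves comparability: since $q\preceq q$ always holds, $(p_i,q)$ and $(p_{i+1},q)$ are comparable in $P\times Q$ whenever $p_i,p_{i+1}$ are comparable in $P$, and symmetrically in the second coordinate. Thus I first move the first coordinate along the $P$-path with the second frozen at $q_0$, arriving at $(z p_0,q_0)$, and then move the second coordinate along the $Q$-path with the first frozen at $z p_0$, arriving at $(z p_0,z q_0)=z\cdot(p_0,q_0)$. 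This is a single path in the comparability graph of $P\times Q$, and since the diagonal action is free and $z\ne e$ its endpoints are distinct yet lie in one orbit; Proposition~\ref{prop:no-path} then gives $\xind P\times Q\ge 1$.

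I expect the one genuinely load-bearing point to be the verification that $H_P$ is a subgroup, because it is precisely this that upgrades ``$P$ admits \emph{some} nontrivial same-orbit path'' to ``$P$ admits a same-orbit path with translation $z\in Z$'', and only a \emph{common} translation $z$ for both factors lets the two paths be spliced into a single orbit of $P\times Q$. For a non-nice group the two factors may force genuinely different minimal translations whose subgroups meet trivially, which is exactly the mechanism behind Lemma~\ref{lem: main lemma}; so I would take care to ensure throughout that the same $z$ serves both $P$ and $Q$.
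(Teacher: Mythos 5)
Your proposal is correct and follows essentially the same route as the paper: both arguments extract same-orbit paths in $P$ and $Q$ via Proposition~\ref{prop:no-path}, use translation-and-concatenation of paths to realize a common nonidentity translation $g^*$ (the paper works with powers of a single translation inside $\langle g\rangle\cap\langle h\rangle$, while you package the same idea as the subgroup $H_P$ of all achievable translations and invoke the unique minimal subgroup), splice the two paths coordinatewise in $P\times Q$, and finish the upper bound with the projection map. The differences are purely organizational, so no further comment is needed.
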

\begin{proof}

	Since $\xind P=1, \xind Q=1$, it follows from Proposition \ref{prop:no-path} that there exist two paths $p\to\dots\to gp$ and $q\to \dots\to hq$ in the comparability graphs of $P$ and $Q$, respectively, for some $p\in P$, $g\in G, g\ne e$ and some $q\in Q$, $h\in G, h\ne e$.
	
	As $G$ is a nice group, the subgroups generated by $g$ and $h$ share some element $g^*\ne e$.  
We now construct a path from $p_1$ to $g^*p_m$. 
Multiply the path $p\to\dots\to gp$ by $g$, we obtain $gp\to\dots\to g^2 p$. Repeating the same procedure, the path can be extended to
\[
    p\to\dots\to gp\to\dots\to g^2 p\to\dots\to g^tp
\]
for any $t\ge 1$.
Let $t$ be so that $g^t=g^*$, then we obtain a path from $p$ to $g^*p$. By a similar construction, we also obtain a path from $q$ to $g^*q$. Let $L$ denote the path from $p$ to $g^*p$ and $L'$ denote the path from $q$ to $g^*q$. The concatenation of $p\times L'$ and $L\times g^*q$, i.e.
 \[
   (p,q)\to\dots\to (p,g^*q)\to\dots\to (g^*p,g^*q),
 \]
is actually a path from $(p,q)$ to $(g^*p,g^*q)$ in the comparability graph of $P\times Q$. 
 Since $(p,q)$ and $(g^*p,g^*q)$ are two distinct elements in the same orbit, it follows from Proposition \ref{prop:no-path} that $\xind P\times Q > 0$. In fact, $\xind P\times Q=1$ because the projection map $P\times Q\to P$ on the first component is a $G$-map which in particular implies $\xind (P\times Q)\le \xind(P)=1$.
\end{proof}

Now, we are in position to present the proofs of Theorems~\ref{Thm: main2} and~\ref{Thm: main3}.

\begin{proof}[Proof of Theorem \ref{Thm: main2}]
It is a direct consequence of Theorem~\ref{Thm: main3} and Lemma~\ref{lem: product-cross}.
\end{proof}
 So, to finish this section we need to provide a proof for Theorem~\ref{Thm: main3}. The idea of proof is similar to the proof of~\cite[Theorem 1.2]{matsushita2017z2}.
 
Note that, in general the Cartesian product $\mathcal{K}_1\times\mathcal{K}_2$ of two simplicial complexes $\mathcal{K}_1, \mathcal{K}_2$ is not a simplicial complex. This fact introduces some difficulties in the study of the mapping-index of $ \mathcal{K}_1\times\mathcal{K}_2$ by looking at the cross-index of the face poset of some subdivision of $\mathcal{K}_i$. But, fortunately, there is a notion of product of simplicial complexes that can be very beneficial for our purpose: The simplicial product $\mathcal{K}_1\boxtimes\mathcal{K}_2$ is a simplicial complex whose vertices are the pairs $(x_1, x_2)$ where $x_i$ is the vertex of $\mathcal{K}_i$ for $i=1,2$ and whose simplicies are all $A\subseteq V(\mathcal{K}_1)\times V(\mathcal{K}_2)$ such that $\pi_i(A)$ is a simplex of $\mathcal{K}_i$ for $i=1,2$ where $\pi_i: V(\mathcal{K}_1)\times V(\mathcal{K}_2)\to V(\mathcal{K}_i) $ is the projection map on the $i$-th component for $i=1,2$. Note that the projection map $\pi_i: V(\mathcal{K}_1)\times V(\mathcal{K}_2)\to V(\mathcal{K}_i)$ induces the natural simplicial map $p_i: \mathcal{K}_1\boxtimes\mathcal{K}_2\to\mathcal{K}_i$ for $i=1,2$.

In general, the simplicial product $\mathcal{K}_1\boxtimes\mathcal{K}_2$ does not provide a triangulation for the Cartesian product $\mathcal{K}_1\boxtimes\mathcal{K}_2$, i.e., these two spaces are not homeomorphic. However, they are homotopy equivalent. Indeed, it is known that the natural map $p: \mathcal{K}_1\boxtimes\mathcal{K}_2\to \mathcal{K}_1\times\mathcal{K}_2$, the map which sends $x$ in $\mathcal{K}_1\boxtimes\mathcal{K}_2$ to $(p_1(x),p_2(x))$ in $\mathcal{K}_1\times\mathcal{K}_2$, is homotopy equivalence ~\cite[Lemma 8.11]{eilenberg2015foundations}. This fact is almost enough for our purpose. The only problem is that we need the equivariant version of this fact. The $\mathbb Z/2$-equivariant version has been already established~\cite[Proposition 4.2]{matsushita2017z2} in the literature and a similar argument shows the $G$-equivariant version is also valid for any finite group $G$.
\begin{proposition}
\label{pro: the natural map is homotopy}
Let $\mathcal{K}_1, \mathcal{K}_2$ be free $G$-simplicial complexes. The natural map $p: \mathcal{K}_1\boxtimes\mathcal{K}_2\to \mathcal{K}_1\times\mathcal{K}_2$ is a $G$-homotopy equivalence.    
\end{proposition}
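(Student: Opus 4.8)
The plan is to deduce the $G$-equivariant statement from the non-equivariant one recorded just above (the map $p$ is an ordinary homotopy equivalence by \cite[Lemma 8.11]{eilenberg2015foundations}) by means of the equivariant Whitehead theorem, taking advantage of the fact that all the actions in sight are free. First I would record that both $\mathcal{K}_1\boxtimes\mathcal{K}_2$ and $\mathcal{K}_1\times\mathcal{K}_2$ are \emph{free} $G$-CW complexes. For the Cartesian product freeness is immediate: if $g\cdot(a,b)=(a,b)$ then $ga=a$, so $g=e$ by freeness of $\mathcal{K}_1$; and a (finite) product of CW complexes carries a CW structure whose cells are products of cells, on which the diagonal $G$-action permutes the product cells, yielding a $G$-CW structure. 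For the simplicial product $\mathcal{K}_1\boxtimes\mathcal{K}_2$, which is a $G$-simplicial complex by construction, freeness of the action follows formally from equivariance of $p$: if $gx=x$ then $g\,p(x)=p(gx)=p(x)$, so $g=e$ because the target action is free.

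Next I would invoke the equivariant Whitehead theorem: a $G$-map $f:X\to Y$ between $G$-CW complexes is a $G$-homotopy equivalence if and only if the induced map $f^{H}:X^{H}\to Y^{H}$ on $H$-fixed points is a weak homotopy equivalence for every subgroup $H\leq G$. The payoff of working with free actions is that this criterion collapses almost completely: for every nontrivial subgroup $H$ one has $X^{H}=Y^{H}=\emptyset$, so the condition is vacuous for such $H$. Only the case $H=\{e\}$ remains, where $f^{\{e\}}=p$ is required to be an ordinary weak homotopy equivalence of the underlying spaces. Since $p$ is already a genuine homotopy equivalence by \cite[Lemma 8.11]{eilenberg2015foundations}, this single surviving hypothesis is satisfied, and the theorem yields that the $G$-map $p$ is a $G$-homotopy equivalence.

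The main obstacle I anticipate is purely point-set: justifying that the Cartesian product $\mathcal{K}_1\times\mathcal{K}_2$ (a product of CW complexes, which in general is \emph{not} a simplicial complex) genuinely carries a $G$-CW structure compatible with the diagonal action, so that the equivariant Whitehead theorem applies. For finite complexes this is routine (and in general one can work in the compactly generated category), but it is the step that requires care; everything else is a formal consequence of freeness together with the already-established non-equivariant homotopy equivalence.

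As an alternative that avoids the Whitehead machinery altogether, one could instead transcribe the explicit homotopy inverse and the connecting homotopies of \cite[Proposition 4.2]{matsushita2017z2} from $\mathbb{Z}/2$ to a general finite group $G$, verifying at each stage that the constructions can be carried out $G$-equivariantly; this is presumably the ``similar argument'' alluded to above, but it is more laborious than the fixed-point reduction, which is why I would favour the Whitehead-theoretic route.
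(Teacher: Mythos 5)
Your proposal is correct and coincides with the paper's own justification: the authors likewise invoke Bredon's theorem (the free-action case of the equivariant Whitehead theorem, \cite[Section II.2]{tom2011transformation}), under which a $G$-map of free $G$-CW complexes is a $G$-homotopy equivalence precisely when it is an ordinary homotopy equivalence, the latter being supplied by \cite[Lemma 8.11]{eilenberg2015foundations}. Your fallback of transcribing \cite[Proposition 4.2]{matsushita2017z2} equivariantly is also exactly the paper's other stated route, so nothing essentially new or missing here.
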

One can also deduce the equivariant version, Proposition~\ref{pro: the natural map is homotopy}, from the topological version using a theorem of Bredon~\cite[Section II.2]{tom2011transformation} which says when a $G$-equivariant map can be a $G$-homotopy equivalence. In particular, in the case the action is free, it says that a $G$-equivariant map is a $G$-homotopy equivalence if and only if it is an ordinary homotopy equivalence. 
\begin{proposition}
\label{pro: equivariant simplicial approximation theorem for product}
Let $\mathcal{K}, \mathcal{L}, \mathcal{M}$ be free $G$-simplicial complexes and $f: \mathcal{K}\times\mathcal{L}\to  \mathcal{M}$ be an $G$-equivariant map. Then, there is an integer $r\geq 0$ and a $G$-simplicial map $\psi : \sd^{r}(\mathcal{K})\boxtimes \sd^{r}(\mathcal{L})\to \mathcal{M}$ making the following diagram commute up to $G$-homotopy.

\begin{center}
\begin{tikzpicture}

\node (a) at (0,0)  {$\sd^{r}(\mathcal{K})\boxtimes \sd^{r}(\mathcal{L})$};
\node (b) at (5,0)  {$\sd^{r}(\mathcal{K})\times \sd^{r}(\mathcal{L})$};
\node (c) at (9,0)  {$\mathcal{K}\times\mathcal{L}$};
\node (d) at (9,-2)  {$\mathcal{M}$};
\node (x) at (2.5,0.3) {$p$};
\node (y) at (7.5,0.3) {$\cong_{G}$};
\node (z) at (9.3,-1) {$f$};
\node (w) at (5.5,-1.5) {$\psi$};

\draw[->,] (a) -- (b);
\draw[->] (b) -- (c);
\draw[->] (c) -- (d);
\draw[->] (a) -- (d);

\end{tikzpicture}
\end{center}

\end{proposition}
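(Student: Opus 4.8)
The plan is to reduce everything to a single simplicial approximation statement and to verify that it can be carried out $G$-equivariantly while subdividing \emph{only the two factors}. Write $p_{r}$ for the composite $\sd^{r}(\mathcal{K})\boxtimes\sd^{r}(\mathcal{L})\xrightarrow{p}\sd^{r}(\mathcal{K})\times\sd^{r}(\mathcal{L})\cong_{G}\mathcal{K}\times\mathcal{L}$, where the identification is the subdivision $G$-homeomorphism, and set $h_{r}:=f\circ p_{r}\colon \sd^{r}(\mathcal{K})\boxtimes\sd^{r}(\mathcal{L})\to\mathcal{M}$. Each $h_{r}$ is a continuous $G$-equivariant map out of an honest $G$-simplicial complex, and the assertion that the diagram commutes up to $G$-homotopy is exactly $\psi\simeq_{G}h_{r}$. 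Thus it suffices to produce, for some $r$, a $G$-simplicial map $\psi\colon \sd^{r}(\mathcal{K})\boxtimes\sd^{r}(\mathcal{L})\to\mathcal{M}$ that is a simplicial approximation of $h_{r}$: the usual straight-line homotopy between a map and a simplicial approximation of it is $G$-equivariant once both maps are (since $G$ acts affinely on each simplex of $\mathcal{M}$), so such a $\psi$ is automatically $G$-homotopic to $h_{r}$.

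The geometric crux is that subdividing the factors, rather than the whole product $\mathcal{K}\boxtimes\mathcal{L}$, already makes the open-star cover fine enough. For a vertex $(x,y)$ of $\sd^{r}(\mathcal{K})\boxtimes\sd^{r}(\mathcal{L})$ I would first observe that $p_{r}\bigl(\operatorname{st}(x,y)\bigr)\subseteq \overline{\operatorname{st}}(x)\times\overline{\operatorname{st}}(y)$, closed stars taken in $\sd^{r}(\mathcal{K})$ and $\sd^{r}(\mathcal{L})$: any point of $\operatorname{st}(x,y)$ lies in a $\boxtimes$-simplex $A\ni(x,y)$, so its two coordinates lie in the closed simplices $\pi_{1}(A)\subseteq\overline{\operatorname{st}}(x)$ and $\pi_{2}(A)\subseteq\overline{\operatorname{st}}(y)$. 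Since $\operatorname{mesh}(\sd^{r})\to 0$, the diameters of these sets tend to $0$ uniformly in $(x,y)$ as $r\to\infty$. Choosing a Lebesgue number $\lambda>0$ for the open cover $\{f^{-1}(\operatorname{st}(w))\}_{w\in V(\mathcal{M})}$ of the compact space $\mathcal{K}\times\mathcal{L}$ (here I use finiteness of $\mathcal{K},\mathcal{L}$, which holds in all cases of interest), for $r$ large enough that every $p_{r}(\operatorname{st}(x,y))$ has diameter below $\lambda$ each such set lies in some $f^{-1}(\operatorname{st}(w))$, i.e.\ $h_{r}(\operatorname{st}(x,y))\subseteq\operatorname{st}(w)$ for some vertex $w=w(x,y)$ of $\mathcal{M}$. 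This is precisely the star condition needed for a simplicial approximation.

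It remains to make the vertex assignment $(x,y)\mapsto w(x,y)$ equivariant. I would pick one representative from each $G$-orbit of vertices of $\sd^{r}(\mathcal{K})\boxtimes\sd^{r}(\mathcal{L})$, choose $w(x,y)$ satisfying the star condition for each representative, and propagate by $\psi(g\cdot(x,y)):=g\cdot w(x,y)$. This is well defined because the diagonal action on the domain is free, so the stabilizer of $(x,y)$ is trivial; and the star condition is preserved along the orbit since $h_{r}$ is $G$-equivariant and $\operatorname{st}(g\cdot w)=g\cdot\operatorname{st}(w)$, giving $h_{r}(\operatorname{st}(g\cdot(x,y)))=g\cdot h_{r}(\operatorname{st}(x,y))\subseteq g\cdot\operatorname{st}(w)=\operatorname{st}(g\cdot w)$. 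The star condition then guarantees, by the standard simplicial-approximation lemma, that $\psi$ carries simplices of the domain to simplices of $\mathcal{M}$, so $\psi$ is a $G$-simplicial map and a simplicial approximation of $h_{r}$; by the first paragraph $\psi\simeq_{G}h_{r}$, which is the desired commutativity.

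The only genuinely new point over the classical equivariant simplicial approximation theorem already invoked in Proposition~\ref{Thm: simplicial approximation theorem} is the estimate $p_{r}(\operatorname{st}(x,y))\subseteq\overline{\operatorname{st}}(x)\times\overline{\operatorname{st}}(y)$, which lets one refine the cover using only subdivisions of the two factors instead of a subdivision of the simplicial product; everything else is the standard Lebesgue-number argument made equivariant by freeness. The main thing to watch is that the free-action hypothesis is used twice: for well-definedness of the equivariant extension, and (together with compactness) for comparing the uniform mesh bound against the Lebesgue number.
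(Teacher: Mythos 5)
Your proof is correct: the star estimate $p_{r}\bigl(\operatorname{st}(x,y)\bigr)\subseteq\overline{\operatorname{st}}(x)\times\overline{\operatorname{st}}(y)$ is exactly the point that lets one run the equivariant simplicial approximation argument while subdividing only the two factors rather than the simplicial product itself, and the orbit-representative device together with freeness correctly makes the vertex assignment (and hence the straight-line homotopy) $G$-equivariant. The paper does not actually prove this proposition --- it cites the $\mathbb{Z}/2$ case from Matsushita and asserts that ``a similar argument'' gives the general case --- and your write-up is precisely that argument carried out in full, so it follows the intended route and supplies the details the paper omits.
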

Again, this proposition is known~\cite[Proposition 4.2]{matsushita2017z2} for the case $G=\mathbb Z/2$ and one can use a similar argument to establish it for any finite group $G$.

\begin{proposition}
\label{pro: faceposet-products}
For every finite $G$-simplicial complexes $\mathcal{K}_1$ and $\mathcal{K}_2$, there is a $G$-map from $\mathcal{F}(\mathcal{K}_1)\times\mathcal{F}(\mathcal{K}_2)$ to $\mathcal{F}(\mathcal{K}_1\boxtimes \mathcal{K}_2)$ and vice versa. 
\end{proposition}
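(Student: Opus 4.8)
The plan is to exhibit the two obvious candidate maps explicitly and, in each direction, verify that they are well-defined, order-preserving, and $G$-equivariant. Everything takes place at the level of posets, so no subdivision or homotopy-theoretic input is required; the whole content is a straightforward unwinding of the definitions of $\mathcal{F}(-)$, of the poset product, and of the simplicial product $\boxtimes$.

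For the forward map $\Phi:\mathcal{F}(\mathcal{K}_1)\times\mathcal{F}(\mathcal{K}_2)\to \mathcal{F}(\mathcal{K}_1\boxtimes\mathcal{K}_2)$ I would send a pair of simplices $(\sigma,\tau)$ to the set $\sigma\times\tau=\{(x,y):x\in\sigma,\ y\in\tau\}$. First I would check this lands in the target: since $\pi_1(\sigma\times\tau)=\sigma$ and $\pi_2(\sigma\times\tau)=\tau$ are (non-empty) simplices of $\mathcal{K}_1$ and $\mathcal{K}_2$ respectively, the set $\sigma\times\tau$ satisfies the defining condition to be a simplex of $\mathcal{K}_1\boxtimes\mathcal{K}_2$. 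Order-preservation is immediate, as $\sigma_1\subseteq\sigma_2$ and $\tau_1\subseteq\tau_2$ yield $\sigma_1\times\tau_1\subseteq\sigma_2\times\tau_2$. Equivariance follows because both actions are diagonal on vertices, so $g\cdot(\sigma\times\tau)=(g\sigma)\times(g\tau)=\Phi(g\sigma,g\tau)$.

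For the reverse map $\Psi:\mathcal{F}(\mathcal{K}_1\boxtimes\mathcal{K}_2)\to \mathcal{F}(\mathcal{K}_1)\times\mathcal{F}(\mathcal{K}_2)$ I would send a simplex $A$ to the pair of its projections $(\pi_1(A),\pi_2(A))$. By the very definition of $\boxtimes$ each $\pi_i(A)$ is a simplex of $\mathcal{K}_i$, and it is non-empty whenever $A$ is, so the image is a genuine element of $\mathcal{F}(\mathcal{K}_1)\times\mathcal{F}(\mathcal{K}_2)$. If $A\subseteq B$ then $\pi_i(A)\subseteq\pi_i(B)$, giving order-preservation, and since $\pi_i(g\cdot A)=g\cdot\pi_i(A)$ the map is $G$-equivariant.

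I do not expect any genuine obstacle: the only points needing a line of justification are that $\sigma\times\tau$ meets the defining condition of a simplex in $\mathcal{K}_1\boxtimes\mathcal{K}_2$ and, dually, that the projections $\pi_i(A)$ are simplices, both of which are essentially the definition of the simplicial product. As a sanity check one notes that $\Psi\circ\Phi$ is the identity, while $\Phi\circ\Psi$ only inflates $A$ to $\pi_1(A)\times\pi_2(A)\supseteq A$; thus the two maps witness the stated claim in both directions without being mutually inverse, which is all that is needed.
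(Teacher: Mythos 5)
Your proposal is correct and uses exactly the same two maps as the paper, namely $(\sigma,\tau)\mapsto\sigma\times\tau$ and $A\mapsto(\pi_1(A),\pi_2(A))$; the paper simply states these maps without spelling out the verifications you provide. No further comment is needed.
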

\begin{proof}
    Following maps do the job.
 \begin{align*}
\varphi:\mathcal{F}(\mathcal{K}_1)\times \mathcal{F}(\mathcal{K}_2) &\longrightarrow \mathcal{F}(\mathcal{K}_1\boxtimes \mathcal{K}_2)\\
  {\left(A,B\right)} &\longmapsto  {\left(A\times B\right)},
\end{align*}
and
 \begin{align*}
  \psi: \mathcal{F}(\mathcal{K}_1\boxtimes \mathcal{K}_2) &\longrightarrow \mathcal{F}(\mathcal{K}_1)\times \mathcal{F}(\mathcal{K}_2)\\
  {A}&\longmapsto {\left(\pi_1(A), \pi_2(A)\right)}. \qedhere
\end{align*}
\end{proof}

Now, we are in a position to present the proof of Theorem~\ref{Thm: main3}.

\begin{proof}[Proof of Theorem \ref{Thm: main3}]
Suppose $\mathsf{HCX}_n(G)$ is true for some $n\geq 0$. Let $\mathcal{K}_1$ and $\mathcal{K}_2$ be free $G$-simplicial complexes with $\ind \mathcal{K}_1= \ind \mathcal{K}_2=n$. Set $\ind\mathcal{K}_1\times \mathcal{K}_2 = m$. We need to show that $m = n$. As it is discussed in Remark~\ref{rem:new}, clearly we have $m\leq n$. To show that the other direction, note that there is a $G$-equivariant map $f: \mathcal{K}_1\times \mathcal{K}_2\to E_mG\cong_{G}\Delta(Q_mG)$ as $\ind\mathcal{K}_1\times \mathcal{K}_2 = m$. Now, by proposition~\ref{pro: equivariant simplicial approximation theorem for product}, there is an $r_1\geq 0$ and a simplicial $G$-map $\sd^{r_1}(\mathcal{K}_1)\boxtimes \sd^{r_1}(\mathcal{K}_2)\to \Delta(Q_mG)$. This, using Proposition~\ref{pro: faceposet-products} and Proposition~\ref{lem:1}, implies a $G$-map $\phi :\mathcal{F}(\sd^{r_1}(\mathcal{K}_1))\times \mathcal{F}\sd^{r_1}(\mathcal{K}_2)\to Q_mG$. Also, by Proposition~\ref{Thm: simplicial approximation theorem} there is a non-negative integer $r_2\geq r_1$ such that
\begin{align}
\label{eq:1}
  \ind\mathcal{K}_1 & = \xind\mathcal{F}(\sd^{r_2}(\mathcal{K}_1))\\
\label{eq:2} \ind\mathcal{K}_2 & = \xind\mathcal{F}(\sd^{r_2}(\mathcal{K}_2)).
\end{align}
Moreover, by Proposition~\ref{lem:1}, there are $G$-maps $\mathcal{F}(\sd^{r_2}(\mathcal{K}_1))\to \mathcal{F}(\sd^{r_1}(\mathcal{K}_1)) $ and $\mathcal{F}(\sd^{r_2}(\mathcal{K}_1))\to \mathcal{F}(\sd^{r_1}(\mathcal{K}_2)) $ as $r_2\geq r_1$. This implies a $G$-map 
\[\mathcal{F}(\sd^{r_2}(\mathcal{K}_1))\times \mathcal{F}(\sd^{r_2}(\mathcal{K}_2)) \to \mathcal{F}(\sd^{r_1}(\mathcal{K}_1))\times \mathcal{F}(\sd^{r_1}(\mathcal{K}_2)).\]
Finally, combining this map with $\phi :\mathcal{F}(\sd^{r_1}(\mathcal{K}_1))\times \mathcal{F}\sd^{r_1}(\mathcal{K}_2)\to Q_mG$ gives a $G$-map
\begin{align}
\label{eq:3}
    \mathcal{F}(\sd^{r_2}(\mathcal{K}_1))\times \mathcal{F}\sd^{r_2}(\mathcal{K}_2)\to Q_mG
\end{align}
Thus, we have
\begin{align*}
n & = \min\{\ind\mathcal{K}_1, \ind\mathcal{K}_2\}\\
& = \min\{\xind\mathcal{F}(\sd^{r_2}(\mathcal{K}_1)), \xind\mathcal{F}(\sd^{r_2}(\mathcal{K}_2))\} & \text{(by Equalities~\eqref{eq:1}, \eqref{eq:2}})\\
& = \xind\mathcal{F}(\sd^{r_2}(\mathcal{K}_1))\times\mathcal{F}(\sd^{r_2}(\mathcal{K}_2)) & \text{(by $\mathsf{HCX}_{n}(G)$)}\\
& \leq m & \text{by~\eqref{eq:3}}. &\qedhere
\end{align*}
\end{proof}

\section{Open Problems}
\label{sec:open-problems}
In this section we present some open problems and conjectures that we were not able to answer. According to our result in this paper, we believe the following conjectures might be true. We mention our conjectures from the strongest to the weakest form. 
\begin{conjecture}
If $G$ is a nice group, then for every finite free $G$-simplicial posets $\mathcal{P}_1$ and $\mathcal{P}_2$,
\[
    \xind P_1\times P_2=\min\{\xind P_1, \xind P_2\}.
\]
\end{conjecture}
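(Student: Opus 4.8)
The inequality $\xind(P_1\times P_2)\le\min\{\xind P_1,\xind P_2\}$ is immediate, since the two coordinate projections $P_1\times P_2\to P_i$ are $G$-maps and cross-index is monotone under $G$-maps. So the whole content is the reverse inequality: writing $n=\min\{\xind P_1,\xind P_2\}$, I must show $\xind(P_1\times P_2)\ge n$, and I may assume $\xind P_1,\xind P_2\ge n$. The case $n\le 1$ is already settled by Proposition \ref{prop:no-path} and Lemma \ref{lem: product-cross}, so the real target is the inductive step from $n-1$ to $n$.

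The first thing I would do is replace the definition of cross-index by a purely combinatorial coloring criterion. Unwinding what a $G$-map $\psi=(\psi_G,\ell)\colon P\to Q_nG$ is, one checks that $\xind P\le n$ holds if and only if there is a $G$-invariant, order-preserving \emph{level function} $\ell\colon P\to\{0,1,\dots,n\}$ whose \emph{equal-level comparability graph} $\Gamma_\ell$ --- the graph on $P$ joining two comparable elements that receive the same level --- has the property that $G$ acts freely on its set of connected components; equivalently, no component of $\Gamma_\ell$ contains two distinct elements of one $G$-orbit. Indeed, order-preservation into $Q_nG$ forces the group coordinate $\psi_G$ to be constant on each component of $\Gamma_\ell$, and such a constant choice extends to a $G$-equivariant $\psi_G$ exactly when the $G$-action on components is free; this generalizes the $n=0$ statement underlying Proposition \ref{prop:no-path} (there $\ell\equiv 0$ and $\Gamma_\ell$ is the full comparability graph). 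With this criterion the conjecture becomes a Hedetniemi-type statement about level functions: in the contrapositive, from a good height-$(n-1)$ level function on $P_1\times P_2$ one must produce one on $P_1$ or on $P_2$, or, dually, from ``obstructions'' in $P_1$ and in $P_2$ one must build an obstruction in the product.

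The plan for the inductive step is to follow the constructive template of Lemma \ref{lem: product-cross} --- combining obstructions rather than extracting colorings. Given a good $\ell$ on $P_1\times P_2$ of height $n-1$, I would peel off a top level using the ordinal-sum decomposition $Q_nG=Q_0G\oplus Q_{n-1}G$ (whose order complex is the join $G*E_{n-1}G=E_nG$), together with the level-monotonicity of cross-index, the poset analogue of \cite[Lemma 11]{daneshpajouh2019}, in order to reduce to the inductive hypothesis on the remaining part. The nice-group hypothesis should enter exactly as for $n=1$: since $G$ has a unique minimal nontrivial subgroup $H_0$, any two nontrivial cyclic subgroups arising from separate obstructions in $P_1$ and in $P_2$ share a common nontrivial element $g^*\in H_0$, and this $g^*$ is what lets me \emph{synchronize} a structure found in a slice $\{p\}\times P_2$ with one in $P_1\times\{q\}$ to assemble a single same-orbit obstruction in $P_1\times P_2$.

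The step I expect to be the genuine obstacle is precisely this synchronization at heights $n\ge 2$. For $n=1$ the certificate for $\xind\ge 1$ is an honest \emph{existence} statement --- a comparability path between two points of one orbit --- and such paths translate and concatenate transparently under the $G$-action. For $n\ge 2$, by contrast, $\xind P\ge n$ is the \emph{non-existence} of a height-$(n-1)$ level function, and I know of no comparably simple positive certificate to transport across a product; this is exactly the point at which ordinary, non-equivariant Hedetniemi fails, so any workable argument must use the unique-minimal-subgroup property of nice groups in an essential, higher-dimensional way. Making the peeling-and-synchronization argument rigorous --- most plausibly by developing an equivariant obstruction-theoretic or connectivity certificate for $\xind\ge n$ that is stable under the coordinatewise $G$-action and compatible with the ordinal-sum structure of $Q_nG$ --- is where the real difficulty lies.
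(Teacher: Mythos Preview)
The statement you are attempting to prove is not a theorem of the paper: it is listed in Section~\ref{sec:open-problems} as an open conjecture, with no proof given. There is therefore no ``paper's own proof'' to compare your proposal against. The paper only settles the case $n\le 1$ (Lemma~\ref{lem: product-cross}) and explicitly leaves the general case open.

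Your proposal is candid about this: you write that the case $n\le 1$ is already done, and then describe a plan for the inductive step whose crucial move---transporting a positive certificate for $\xind\ge n$ across a product and synchronizing via the unique minimal subgroup---you yourself flag as ``the genuine obstacle'' and ``where the real difficulty lies.'' That assessment is accurate. The reformulation via level functions and the equal-level comparability graph is correct and useful, and the observation that for $n=1$ the certificate is existential (a path) while for $n\ge 2$ the condition $\xind\ge n$ is a non-existence statement is exactly the reason no one has pushed Lemma~\ref{lem: product-cross} further. Your peeling idea using the ordinal-sum decomposition $Q_nG=Q_0G\oplus Q_{n-1}G$ is natural, but as written it does not actually yield an inductive reduction: stripping a level from a putative $G$-map $P_1\times P_2\to Q_{n-1}G$ does not produce a $G$-map from a product of smaller posets into $Q_{n-2}G$, because the top-level preimage in $P_1\times P_2$ need not split as a product or even be $G$-invariantly fibered over either factor.

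In short, what you have is a correct proof for $n\le 1$ (matching the paper), an honest identification of the open problem for $n\ge 2$, and a heuristic program rather than a proof. If you can produce an explicit, transportable \emph{positive} witness for $\xind P\ge n$---something playing the role that comparability paths play for $n=1$---that would be genuine progress; absent that, the proposal does not go beyond what the paper already establishes.
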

\begin{conjecture}
If $G$ is a nice group, then for every finite free $G$-simplicial complexes $\mathcal{K}_1$ and $\mathcal{K}_2$,
\[
    \xind\mathcal{F}(\mathcal{K}_1)\times \mathcal{F}(\mathcal{K}_2)=\min\{\xind\mathcal{F}(\mathcal{K}_1), \xind\mathcal{F}(\mathcal{K}_2)\}.
\]
\end{conjecture}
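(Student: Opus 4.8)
The plan is to prove the two inequalities separately. The bound $\xind \mathcal{F}(\mathcal{K}_1)\times \mathcal{F}(\mathcal{K}_2)\le \min\{\xind\mathcal{F}(\mathcal{K}_1),\xind\mathcal{F}(\mathcal{K}_2)\}$ is the free half, valid for every $G$: the two projections $\mathcal{F}(\mathcal{K}_1)\times \mathcal{F}(\mathcal{K}_2)\to \mathcal{F}(\mathcal{K}_i)$ are $G$-maps, and $\xind$ is monotone under $G$-maps (compose a $G$-map $P\to P'$ with an optimal $G$-map $P'\to Q_nG$). The substance is the reverse inequality, and the first move is to reduce the $\min$-formula to the equal-index statements $\mathsf{HCX}_n(G)$ for all $n$, exactly as Remark~\ref{rem:new} reduces the topological conjecture to the family $\{\mathsf{HCT}_n(G)\}_n$. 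Concretely, if $\xind\mathcal{F}(\mathcal{K}_1)\ge \xind\mathcal{F}(\mathcal{K}_2)=n$, I would pass to a skeleton $\mathcal{K}_1'$ of $\mathcal{K}_1$ with $\xind\mathcal{F}(\mathcal{K}_1')=n$; since the inclusion $\mathcal{F}(\mathcal{K}_1')\hookrightarrow \mathcal{F}(\mathcal{K}_1)$ is a $G$-map, the induced $G$-map $\mathcal{F}(\mathcal{K}_1')\times \mathcal{F}(\mathcal{K}_2)\to \mathcal{F}(\mathcal{K}_1)\times \mathcal{F}(\mathcal{K}_2)$ gives $\xind$ of the smaller product $\le$ that of the larger, so $\mathsf{HCX}_n(G)$ (forcing the smaller product to have cross-index exactly $n$) yields the lower bound $n$.

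To make the skeleton step legitimate I first need the cross-index analogues of the two facts used in Remark~\ref{rem:new}: monotonicity under subcomplex inclusion (immediate) and the bound $\xind\mathcal{F}(\mathcal{K}_{i+1})-\xind\mathcal{F}(\mathcal{K}_i)\le 1$ across consecutive skeleta. I would obtain the latter from a characterization that I expect to be the main technical engine throughout: for a finite free $G$-poset $P$, one has $\xind P\le n$ if and only if $P$ admits a $G$-invariant partition into sets $V_0,\dots,V_n$ such that each $D_j=V_0\cup\cdots\cup V_j$ is a down-set and each induced subposet $V_i$ satisfies $\xind V_i=0$. This follows by unwinding what a $G$-map $\phi=(c,\ell):P\to Q_nG$ is: order-preservation forces the level function $\ell$ to be monotone, and within each level the $G$-label $c$ is precisely a $G$-map of that level to $Q_0G$, i.e. a cross-index-$0$ certificate in the sense of Proposition~\ref{prop:no-path}. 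Since the top layer $\mathcal{K}_{i+1}\setminus\mathcal{K}_i$ of a skeleton is an antichain, hence of cross-index $0$, stacking it on an optimal filtration of $\mathcal{F}(\mathcal{K}_i)$ gives the $+1$ bound; as the $0$-skeleton has cross-index $0$, a discrete intermediate-value argument then produces a skeleton of any prescribed cross-index up to $\xind\mathcal{F}(\mathcal{K}_1)$.

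With the reduction in place the whole problem becomes: prove $\mathsf{HCX}_n(G)$ for every nice $G$ and every $n$. I would attempt an induction on $n$, the cases $n=0$ (trivial) and $n=1$ (Lemma~\ref{lem: product-cross}) being known, phrasing the inductive step through the filtration characterization: assuming $\xind(\mathcal{F}(\mathcal{K}_1)\times \mathcal{F}(\mathcal{K}_2))\le n-1$, one extracts a cross-index-$0$ filtration of the product with $n$ layers and must manufacture from it a filtration of $\mathcal{F}(\mathcal{K}_1)$ or of $\mathcal{F}(\mathcal{K}_2)$ with $n$ layers, contradicting $\xind=n$. The nice-group hypothesis should enter exactly where it does for $n=1$: the unique minimal nontrivial subgroup $M\le G$ guarantees that any two nontrivial cyclic subgroups meet in $M\setminus\{e\}$, which is what lets one transport and concatenate cross-index-witnessing configurations between the two factors.

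The step I expect to be the real obstacle is precisely this gluing. For $n=1$ the witness of $\xind\ge 1$ is a single, transportable object, namely a path in the comparability graph joining two points of one orbit, and the nice-group property lets one power up and splice a path in $P$ with a path in $Q$ into a path in $P\times Q$. For $n\ge 2$ the obstruction to $\xind\le n-1$ is the nonexistence of an $(n{-}1)$-level cross-index-$0$ filtration, a global statement with no evident single-path witness, and a product of filtrations of the factors is not a filtration of the product; this mismatch is the combinatorial shadow of Hedetniemi multiplicativity itself. Via the subdivision correspondence of Proposition~\ref{Thm: simplicial approximation theorem}, establishing $\mathsf{HCX}_n(G)$ for all $n$ is essentially as strong as the topological Hedetniemi conjecture for nice groups, so I would not expect a short argument; the realistic deliverable of this plan is the reduction together with the filtration characterization and the $n=1$ anchor, isolating the higher-$n$ gluing as the genuine open core.
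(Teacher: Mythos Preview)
The statement you are addressing is Conjecture~2 in Section~\ref{sec:open-problems} of the paper: it is explicitly listed among the open problems, and the paper offers no proof of it. So there is no ``paper's own proof'' against which to compare your attempt.

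Your proposal is consistent with this status. The parts you actually carry out are sound: the $\leq$ direction via projections is correct, your filtration characterization of $\xind P\le n$ is valid (and gives the $+1$ bound across skeleta), and the reduction of the $\min$-equality to the family $\{\mathsf{HCX}_n(G)\}_{n\ge 0}$ is the cross-index analogue of Remark~\ref{rem:new} and goes through as you describe. The $n=0$ case is trivial and the $n=1$ case is exactly Lemma~\ref{lem: product-cross}. Where you stop is precisely where the paper stops: establishing $\mathsf{HCX}_n(G)$ for $n\ge 2$ is the open core, and you rightly observe that via Proposition~\ref{Thm: simplicial approximation theorem} and Theorem~\ref{Thm: main3} this is essentially as strong as Conjecture~\ref{conj:general-hedet}. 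In short, you have not proved the conjecture, but you have correctly isolated what would need to be proved, and the paper does not do more.
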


\begin{conjecture} \label{conj:general-hedet}
If $G$ is a nice group, then for every finite free $G$-simplicial complexes $\mathcal{K}_1$ and $\mathcal{K}_2$,
\[
    \ind\mathcal{K}_1\times \mathcal{K}_2=\min\{\ind\mathcal{K}_1, \ind\mathcal{K}_2\}.
\]
\end{conjecture}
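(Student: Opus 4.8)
The plan is to transfer the entire problem from the topological setting of $G$-spaces and mapping-index into the combinatorial setting of $G$-posets and cross-index, where the hypothesis $\mathsf{HCX}_n(G)$ can be applied, and then translate the resulting bound back. Fix free $G$-simplicial complexes $\mathcal{K}_1,\mathcal{K}_2$ with $\ind\mathcal{K}_1=\ind\mathcal{K}_2=n$, and write $m=\ind(\mathcal{K}_1\times\mathcal{K}_2)$. The inequality $m\le n$ is immediate from the projection maps (as recorded in Remark~\ref{rem:new}), so the whole content of the statement is the reverse inequality $m\ge n$.

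First I would convert the map realizing $m$ into a simplicial one. Since $\ind(\mathcal{K}_1\times\mathcal{K}_2)=m$, there is a $G$-equivariant map $\mathcal{K}_1\times\mathcal{K}_2\to E_mG\cong_{G}\Delta(Q_mG)$. Feeding this into Proposition~\ref{pro: equivariant simplicial approximation theorem for product} produces, for some $r_1\ge 0$, a simplicial $G$-map $\sd^{r_1}(\mathcal{K}_1)\boxtimes\sd^{r_1}(\mathcal{K}_2)\to\Delta(Q_mG)$. Passing to face posets and composing the induced map with the $G$-maps supplied by Proposition~\ref{pro: faceposet-products} (the direction $\mathcal{F}(\mathcal{K}_1)\times\mathcal{F}(\mathcal{K}_2)\to\mathcal{F}(\mathcal{K}_1\boxtimes\mathcal{K}_2)$) and by Proposition~\ref{lem:1} (from $\mathcal{F}(\Delta(Q_mG))$ to $Q_mG$), I obtain a $G$-map
\[
    \mathcal{F}\!\left(\sd^{r_1}(\mathcal{K}_1)\right)\times\mathcal{F}\!\left(\sd^{r_1}(\mathcal{K}_2)\right)\longrightarrow Q_mG,
\]
so that the product of these two face posets has cross-index at most $m$. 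This records the topological quantity $m$ as a purely combinatorial bound.

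The next step is to calibrate the subdivision level so that the two factors individually carry the correct cross-index. By Proposition~\ref{Thm: simplicial approximation theorem} there is an $r_2\ge r_1$ with $\xind\mathcal{F}(\sd^{r_2}(\mathcal{K}_i))=\ind\mathcal{K}_i=n$ for $i=1,2$ (here I use the free $G$-simplicial complexes $\sd^{r_2}(\mathcal{K}_i)$, freeness being inherited under barycentric subdivision). The point requiring care is that the cross-index bound of the previous paragraph lives at level $r_1$, whereas the equalities $\xind=n$ live at the higher level $r_2$; the two depths are unrelated except that $r_2\ge r_1$. I would reconcile them by invoking Proposition~\ref{lem:1} once more: since $r_2\ge r_1$ there are $G$-maps $\mathcal{F}(\sd^{r_2}(\mathcal{K}_i))\to\mathcal{F}(\sd^{r_1}(\mathcal{K}_i))$, hence a $G$-map of the products, which composed with the displayed map to $Q_mG$ yields $\xind\bigl(\mathcal{F}(\sd^{r_2}(\mathcal{K}_1))\times\mathcal{F}(\sd^{r_2}(\mathcal{K}_2))\bigr)\le m$.

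Finally I would apply the hypothesis. Since $\xind\mathcal{F}(\sd^{r_2}(\mathcal{K}_1))=\xind\mathcal{F}(\sd^{r_2}(\mathcal{K}_2))=n$, the statement $\mathsf{HCX}_n(G)$ forces $\xind\bigl(\mathcal{F}(\sd^{r_2}(\mathcal{K}_1))\times\mathcal{F}(\sd^{r_2}(\mathcal{K}_2))\bigr)=n$; comparing with the bound $\le m$ gives $n\le m$, and hence $m=n$, which is $\mathsf{HCT}_n(G)$. I expect the main obstacle to be conceptual bookkeeping rather than any hard estimate: the Cartesian product $\mathcal{K}_1\times\mathcal{K}_2$ is not itself a simplicial complex, so no cross-index can be read off directly, and the real work lies in routing the continuous map through the simplicial product $\boxtimes$ (Propositions~\ref{pro: the natural map is homotopy} and~\ref{pro: equivariant simplicial approximation theorem for product}) and then matching the two a priori unrelated subdivision depths $r_1$ and $r_2$ through the comparison maps of Proposition~\ref{lem:1}, so that $\mathsf{HCX}_n(G)$ becomes applicable to a single product of face posets.
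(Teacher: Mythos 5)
The statement you are asked to prove is Conjecture~\ref{conj:general-hedet}, which is listed in Section~\ref{sec:open-problems} precisely because the authors could not prove it; its only hypothesis is that $G$ is a nice group. Your argument, however, begins by ``applying the hypothesis $\mathsf{HCX}_n(G)$'' --- but $\mathsf{HCX}_n(G)$ is not a hypothesis of this conjecture. For nice groups it is itself an open statement for every $n\ge 2$ (it is essentially the second conjecture of Section~\ref{sec:open-problems}); the paper establishes it only for $n=1$, via Lemma~\ref{lem: product-cross}. What you have written is a correct and faithful reconstruction of the proof of Theorem~\ref{Thm: main3}, i.e.\ of the implication $\mathsf{HCX}_n(G)\Rightarrow\mathsf{HCT}_n(G)$: the routing of the equivariant map through the simplicial product via Proposition~\ref{pro: equivariant simplicial approximation theorem for product}, the passage to face posets via Proposition~\ref{pro: faceposet-products}, and the reconciliation of the subdivision depths $r_1\le r_2$ via Proposition~\ref{lem:1} all match the paper's argument. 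But that implication is the easy half of the problem; it transfers the question to the combinatorial side without answering it there.

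The genuine gap is therefore the entire combinatorial core: you would need to prove that for a nice group $G$ and arbitrary finite free $G$-simplicial complexes with $\xind\mathcal{F}(\sd^{r_2}(\mathcal{K}_1))=\xind\mathcal{F}(\sd^{r_2}(\mathcal{K}_2))=n$, the product of the face posets also has cross-index $n$. The only technique the paper has for lower-bounding a cross-index is Proposition~\ref{prop:no-path} (a path between two elements of one orbit in the comparability graph), and that tool only certifies $\xind>0$; it gives no handle on showing $\xind\ge n$ for $n\ge 2$, which is why the paper stops at $\mathsf{HCT}_1(G)$ in Theorem~\ref{Thm: main2}. As written, your proposal proves a conditional statement that the paper already proves, and silently promotes an open conjecture to a hypothesis; it does not prove Conjecture~\ref{conj:general-hedet}.
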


We have seen that the mapping-index of a $G$-simplicial complex 
is bounded above by the cross-index of its face poset. So, it is natural to ask how good this bound is. In particular, we are interested in the following questions.

\begin{question}
\label{que: que2}
Given positive integers $m$ and $n$ with $m< n$, is there any finite free $G$-posets $P$ such that
$\ind\Delta(P)  = m$ but $\xind P= n$? 
\end{question}

\begin{question}
\label{que: que1}
Given positive integers $m$ and $n$ with $m < n$, is there any finite free $G$-simplicial complex $\mathcal{K}$ such that
$\ind \mathcal{K}  = m$ but $\xind(\mathcal{F}(\mathcal{K}))= n$?
\end{question}

Regarding to Proposition~\ref{Thm: simplicial approximation theorem}, it is interesting to know how many times it is needed to subdivide a given $G$-simplicial complex $\mathcal{K}$ in which the mapping-index of $\mathcal{K}$ match with the cross-index of the face poset of that refinement of $\mathcal{K}$.

\begin{question}
\label{que: que5}
For a given free $G$-simplicial complex $\mathcal{K}$, what is the minimum integer $r \geq 0$ such that
\[
    \ind \mathcal{K} = \xind\mathcal{F}(\sd^{r}(\mathcal{K}))?
\]
\end{question} 

\begin{question}
\label{que: que5'}
For a given $r \geq 0$, is there a free $G$-simplicial complex $\mathcal{K}$ such that
\[
    \ind \mathcal{K} < \xind\mathcal{F}(\sd^{r}(\mathcal{K}))?
\]
\end{question}

\bibliographystyle{plain}
\bibliography{main}

\end{document}